\DeclareMathOperator{\Span}{Span}
\DeclareMathOperator{\ori}{or}
\DeclareMathOperator{\ord}{ord}
\DeclareMathOperator{\surf}{Surf}
\newtheorem{thm}{Theorem}[section]
\newtheorem{lem}[thm]{Lemma}
\newtheorem{prop}[thm]{Proposition}
\theoremstyle{definition}
\newtheorem{df}[thm]{Definition}
\newtheorem{ex}[thm]{Example}
\theoremstyle{remark}
\newtheorem{rem}[thm]{Remark}
\def\ol#1{\overline{#1}}
\def\ul#1{\underline{#1}}
\def\N{\mathbb N}
\def\Z{\mathbb Z}
\def\R{\mathbb R}
\def\C{\mathbb C}
\def\S{\mathbb S}
\def\FS{\mathfrak S}
\def\M{\mathcal M}
\def\Md{\M^{dec}}
\def\MU#1#2{\ul{\M}_{#1,#2}}
\def\MO#1#2{\ol{\M}_{#1,#2}}
\def\MUC#1#2{\ul{\M}_{#1,#2}^{comb}}
\def\MUD#1#2{\ul{\M}_{#1,#2}^{dec}}
\def\MOD#1#2{\ol{\M}_{#1,#2}^{dec}}
\begin{document}

\title[Semistable Graph Homology]{Semistable Graph Homology}
\address {Universidad del Pac\'{i}fico\\
Av. Salaverry 2020, Jes\'{u}s Mar\'{i}a, Lima 11 - Per\'{u}}
\author[J. Z\'{u}\~{n}iga]{Javier Z\'{u}\~{n}iga}
\email{zuniga\underline{ }jj@up.edu.pe}
\date{\today}

\begin{abstract}
Using the orbicell decomposition of the Deligne-Mumford compactification of the moduli space of Riemann surfaces studied in \cite{zun}, a chain complex based on semistable ribbon graphs is constructed which is an extension of Konsevich's graph homology in \cite{kon:fncsg}. 
\end{abstract}

\maketitle

\tableofcontents

\section{Introduction}

The moduli space of Riemann surfaces lies at the intersection of many important areas of mathematics such as 
complex analysis, algebraic geometry, algebraic topology and mathematical physics to name a few. 
When the topological type of the surface is required to have genus $g$ and $n$ labeled points, 
the moduli space $\M_{g,n}$ parametrizes isomorphism classes of complex structures on such surface, 
and it is the solution to the so-called Riemann's moduli problem. 
This space, also called the open moduli space, has many rich algebraic, geometric, analytic and topological properties, 
however it is not compact. 
The Deligne-Mumford (DM) compactification $\MO{g}{n}$ enlarges the moduli space by allowing 
Riemann surfaces to degenerate to stable surfaces with (double point) singularities adding new points to the open moduli space. 
By performing a real blow-up of the DM compactification along the locus of singular surfaces one obtains the space $\MU{g}{n}$, which is also compact but now has a topological boundary parametrizing degenerations. 
Geometrically one can think of the new points in the boundary of this space as remembering 
the angle at which a closed geodesic in a Riemann surface degenerates to form a singularity.

Decorating each labeled point in the surface by positive real numbers 
leads to the space $\Md_{g,n} = \M_{g,n} \times \Delta^{n-1}$ in an obvious way, 
here $\Delta^{n-1}$ is the standard $n-1$ dimensional simplex. 
Analytically one can 
think of these decorations as residues of 
certain quadratic differentials on the surface, as in \cite{strebel}, 
or as the lengths of certain horocycles, as in \cite{pen87}. 
This notion extends to the compactifications $\MOD{g}{n}$ and $\MUD{g}{n}$. 
In the case of the space $\MUD{g}{n}$ its topology is that of $\MU{g}{n} \times \Delta^{n-1}$. 
However, in the case of $\MOD{g}{n}$ its topology is not easy to describe. 
The difference between the homeomorphism types of $\MOD{g}{n}$ and $\MO{g}{n} \times \Delta^{n-1}$ are 
certain toric singularities as pointed out in \cite{zun}. 
However they are still homotopy equivalent.

The advantage of working with decorated spaces is that they have a natural orbicell decomposition via ribbon graphs. 
Kontsevich uses this decomposition to give a proof of Witten's conjecture in \cite{kon:itma}, 
and also shows in \cite{kon:fncsg} how it relates to $A_\infty$-algebras. 
Costello uses a dual version in \cite{cos:pftft} to construct open-closed topological conformal field theories. 
In fact, Kontsevich uses a partial compactification of $\Md_{g,n}$ in \cite{kon:itma}, 
further studied by Looijenga in \cite{loo} and Zvonkine in \cite{zvon}. 
They use the notion of stable ribbon graph. 
This partial compactification misses higher codimension strata of $\MO{g}{n}$. 
The definition of $\MOD{g}{n}$ given in \cite{zun} captures all the strata of $\MO{g}{n}$, 
and by introducing $\MUD{g}{n}$ further  
applications can be developed. 
On the other side, 
the notion of graph that is necessary to deal with $\MUD{g}{n}$ is that of a semistable ribbon graph. These new graphs account for a behavior similar to the ``bubbling'' phenomenon in J-holomorphic curves.

The orbicell decomposition of $\MUD{g}{n}$ leads naturally to the construction of a chain complex 
(with an appropriate notion of orientation) 
which can be used for example 
to give a combinatorial solution to the Quantum Master Equation (QME). 
This equation has been studied from the point of view of string theory by Zwiebach in \cite{zwi:csft}, 
and further formalized by Costello in \cite{cos:pftft} in its closed version. 
For open-closed string theory Zwiebach presents a solution in \cite{zwi:oocst} that is formalized in \cite{hvz}. 
In all these cases a BV-algebra is constructed based on singular or geometric chains 
and a solution to the QME is given using fundamental classes. 
This work represents a first step towards a purely combinatorial solution to the QME 
using an extension of 
ribbon graph homology of \cite{kon:fncsg} by means of the theory developed in \cite{zun}. 
This is expected due to the isomorphism $\MUD{g}{n} \cong \MUC{g}{n}$, 
where $\MUC{g}{n}$ denotes the moduli space of semistable ribbon graphs. 
All these solutions are different geometric incarnations of a purely algebraic phenomenon 
that gives rise to the solution of the QME in the BV formalism studied in \cite{kwz}. In a similar way it is posible to imagine a generalization of $\MUC{g}{n}$ to the open-closed case that can give a combinatorial solution to the QME in \cite{hvz}. The semistable ribbon graphs in this more general case have to account for the boundaries and labeled points in those boundaries on the associated bordered Riemann surfaces.
\bigskip 

\section{Semistable Ribbon Graphs}

The following is a collection of definitions taken from \cite{zun} 
in order to describe the notion of semistable ribbon graph, central to this work. 

\subsection{Ribbon Graphs}

By a graph we mean a combinatorial object consisting of vertices, edges that split into half-edges and incidence relations. 
We avoid isolated vertices. This object is the same as a one dimensional CW-complex up to cellular homeomorphism. 

We will need to consider a special graph homeomorphic to the circle $\S^1$ with only one edge and no vertices. We call this a {\bf semistable circle}. 
The following definition of ribbon graph allows then for the possibility of having multiple connected components, 
some of them possibly semistable circles.
\medskip 

A {\bf ribbon graph} $\Gamma$ is a finite graph together with a 
cyclic ordering on each set of adjacent half-edges to 
every vertex.
\medskip

If $H$ is the set of half-edges and $v$ is a vertex of $\Gamma$, 
let $H_v$ be the set of half-edges adjacent to this vertex. 
The {\bf valence} of a vertex $v$ is then $|H_v|$, the cardinality of $H_v$. 
A {\bf trivalent} graph is one for which all vertices have valence three. 
A cyclic ordering at a vertex $v$ is an ordering of $H_v$ up to cyclic permutation. 
Once a cyclic ordering of $H_v$ is chosen, a cyclic permutation of $H_v$ is defined (an element of $\FS_{H_v}$): 
it moves a half-edge to the next in the cyclic order. 
Define by $\sigma_0$ the element of $\FS_H$ (the group of permutations acting on $H$) 
which is the product of all the cyclic permutations at every vertex. Also, let $\sigma_1$ be the involution 
that interchanges the two half-edges on each edge of $\Gamma$. 
Notice that $\sigma_0$ does not act on half-edges belonging to semistable circles (as semistable circles have no vertices). 
This combinatorial data completely defines the ribbon graph. 

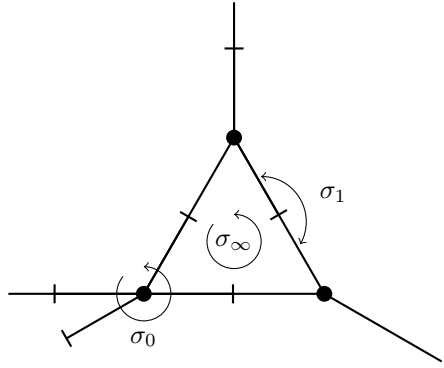
\begin{figure} 
\begin{center}
\begin{tikzpicture}[scale=2.4]
\draw[thick] (1,0) -- (0,0) -- (0.5,0.866) -- (1,0);
\draw[thick] (1,0) -- (1.65,-0.75/2);
\draw[thick] (0.5,0.866) -- (0.5,0.866+0.75); 
\draw[thick] (0,0) -- (-0.75,0);
\draw[thick,-|] (0,0) -- (0.5,0);
\draw[thick,-|] (0,0) -- (-0.5,0);
\draw[thick,-|] (0,0) -- (0.25,0.433);
\draw[thick,-|] (0.5,0.866) -- (0.75,0.433);
\draw[thick,-|] (0.5,0.866) -- (0.5,0.866+0.5);
\draw[thick,-|] (0,0) -- (-.65/3*2,-0.75/3);

\fill (0,0) circle (1.25pt);
\fill (1,0) circle (1.25pt);
\fill (0.5,0.866) circle (1.25pt);

\draw[<-] (0,0.15) arc (90:-220:0.15);
\node at (0,-0.25) {$\sigma_0$};
\draw[<-] (0.5,0.44) arc (90:-220:0.15);
\node at (0.5,0.287) {$\sigma_\infty$};
\draw[<->] (0.65,0.65) arc (90:-30:0.25);
\node at (1.05,0.55) {$\sigma_1$};
\end{tikzpicture} \end{center}
\caption[Combinatorial data]{An example of the permutations that define a ribbon graph around a boundary cycle.} \label{permutations}
\end{figure}

To be more precise, a ribbon graph  $\Gamma$ can be build out of combinatorial data in the following way. 
Let $H$ be a finite set of even cardinality together with a subset (possibly empty) $S \subset H$ such that $|S|$ is also even. 
Let $\sigma_1 \in \FS_H$ be an involution without fixed points under which $S$ is invariant and $\sigma_0 \in \FS_{H-S}$ be such that $\sigma_0$ is a product of cyclic permutations with disjoint support. A vertex of $\Gamma$ is then given as an orbit of $\sigma_0$, while an edge is an orbit of $\sigma_1$. 
The set of vertices can be identified with 
$V(\Gamma)=(H-S)/\sigma_0$ 
and the set of edges with $E(\Gamma)=H/\sigma_1$. 
Semistable circles correspond with pairs of half-edges in the orbit of $\sigma_1$ that belong to $S$.

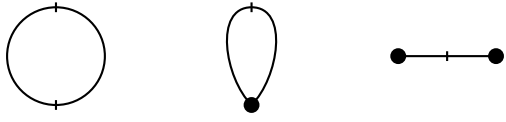
\begin{figure} 
\begin{center}
\begin{tikzpicture}[scale=1.3]

\draw[thick] (0,0) circle (0.5);
\draw[thick] (0,-0.55) -- (0,-0.45);
\draw[thick] (0,0.55) -- (0,0.45);

\draw[thick,xshift=2cm,yshift=-0.5cm,rotate=270] (0,0) to [out=135, in=90] (-1,0) to [out=270,in=225] (0,0);
\draw[thick,xshift=2cm] (0,0.45) -- (0,0.55);
\draw[fill=black,thick,xshift=2cm] (0,-0.5) circle (2pt);

\draw[thick,xshift=3.5cm] (0,0) -- (1,0);
\draw[thick,xshift=3.5cm] (0.5,0.05) -- (0.5,-0.05);
\draw[fill=black,thick,xshift=3.5cm] (0,0) circle (2pt);
\draw[fill=black,thick,xshift=4.5cm] (0,0) circle (2pt);

\end{tikzpicture}
\end{center}
\caption[Graphs with one edge]
{All three graphs with only one edge. 
The first is a semistable circle with two half-edges and no vertices. 
The second is a loop with one vertex. 
The third is an edge with two vertices. 
The first two graphs have two cusps while the third one has only one.} \label{oneedge}
\end{figure}

Let $\sigma_\infty = \sigma_0^{-1} \sigma_1 \in \FS_{H-S}$ where $\sigma_1$ is restricted to $H-S$. 
The set of {\bf cusps} is defined as $C(\Gamma)= (H-S)/\sigma_\infty \sqcup S$.
Note that in this way every semistable circle has two cusps represented by each half-edge. 
The half-edges in the orbit of a cusp form a cyclically ordered set of half-edges called a {\bf boundary cycle}. In the case of semistable circles the boundary cycle contains only one element (the corresponding half-edge associated with the cusp). Given a boundary cycle not coming from a cusp represented by a half-edge in $S$, its half-edges form a graph called a {\bf boundary subgraph}. 
For example, in Figure~\ref{permutations} the boundary cycle represented in the middle forms a boundary subgraph with three edges and three vertices (the middle triangle as a subgraph). The reason for such names will become evident later. 
For a semistable circle we let $\sigma_\infty$ be the identity. 

The cusps and the vertices of valence one or two will be called {\bf distinguished points}. 
Notice also that knowing $\sigma_1$ and $\sigma_\infty$ completely determines 
the ribbon graph structure since we have $\sigma_0= \sigma_1 \sigma_\infty^{-1}$.

A {\bf loop} is an edge incident to only one vertex. 
A {\bf tree} is a connected graph $T$ with trivial $\bar{H}_*(T)$. 

An {\bf isomorphism} of ribbon graphs is a graph isomorphism preserving the cyclic orders on each vertex. 
Therefore, two graphs $\Gamma$, $\Gamma'$ are isomorphic when 
there is a bijection $\eta: H \to H'$ between the set of half-edges of these two graphs 
that commutes with $\sigma_0$, $\sigma_0'$ and $\sigma_1$, $\sigma_1'$. 
In particular, the boundary cycles are preserved, 
\emph{i.e.}, $\eta$ also commutes with $\sigma_\infty$, $\sigma_\infty'$.  
If we restrict to automorphisms of a 
fixed graph it is clear that this definition will generate a group. The group of automorphisms of the semistable circle is $\Z/ 2 \Z$.

An {\bf orientation of an edge} can be defined as an order on its corresponding half edges 
and we can use the notation $\vec{e} = \overrightarrow{h \sigma_1(h)}$ where $h$ is a half-edge. 
The involution $\sigma_1$ switches the orientation of an edge.

\begin{figure} 
\begin{center}
\begin{tikzpicture}[scale=0.9]

\draw[thick] (0,0) circle (0.5);
\draw[thick] (-0.5,0) -- (0.5,0);
\draw[fill=black] (-0.5,0) circle (2pt);
\draw[fill=black] (0.5,0) circle (2pt);

\draw[thick,xshift=2cm] (0,0) to [out=135, in=90] (-1,0) to [out=270,in=225] (0,0);
\draw[thick,xshift=2.5cm,rotate=180] (0,0) to [out=135, in=90] (-1,0) to [out=270,in=225] (0,0);
\draw[thick,xshift=2cm] (0,0) -- (0.5,0);
\draw[fill=black,thick,xshift=2cm] (0,0) circle (2pt);
\draw[fill=black,thick,xshift=2cm] (0.5,0) circle (2pt);

\draw[thick,xshift=5cm] (0,0) to [out=135, in=90] (-1,0) to [out=270,in=225] (0,0);
\draw[thick,xshift=5cm,rotate=180] (0,0) to [out=135, in=90] (-1,0) to [out=270,in=225] (0,0);
\draw[fill=black,thick,xshift=5cm] (0,0) circle (2pt);

\draw[thick,xshift=7.5cm] (0,0) to [out=135, in=90] (-1,0) to [out=270,in=225] (0,0);
\draw[thick,xshift=7.5cm] (0,0) -- (0.7,0);
\draw[fill=black,thick,xshift=7.5cm] (0,0) circle (2pt);
\draw[fill=black,thick,xshift=7.5cm] (0.7,0) circle (2pt);

\draw[thick,xshift=9cm,yshift=-0.5cm,rotate=270] (0,0) to [out=135, in=90] (-1,0) to [out=270,in=225] (0,0);
\draw[fill=black,thick,xshift=9cm] (0,-0.5) circle (2pt);

\draw[thick,xshift=10cm] (0,0) -- (1,0);
\draw[fill=black,thick,xshift=10cm] (0,0) circle (2pt);
\draw[fill=black,thick,xshift=11cm] (0,0) circle (2pt);

\end{tikzpicture}
\end{center}
\caption[All (0,3) ribbon graphs]{Each connected ribbon graph shown generates a sphere with three labeled points.} \label{simplegraphs}
\end{figure}
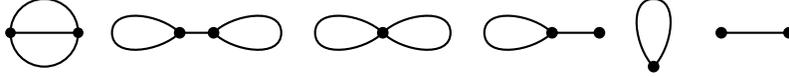

To every ribbon graph $\Gamma$ we associate an oriented surface $\surf(\Gamma)$ as follows. 
To each oriented edge $\vec{e} = \overrightarrow{h \sigma_1(h)}$ we attach at the base an oriented triangle $K_{\vec{e}}=C |\vec{e}|$, 
where $|\vec{e}|$ is homeomorphic to the closed unit interval and $C$ refers to the cone construction. 
The base of $K_{\vec{e}}$ is identified with the base of $K_{\sigma_1(\vec{e})}$. 
Next we paste the right-hand edge of $K_{\vec{e}}$ with the left-hand edge of $K_{\sigma_\infty(\vec{e})}$. 
All vertices opposite to the bases of all triangles in the same boundary cycle are identified. 
Such points are in one to one correspondence with the orbits of $\sigma_\infty$; 
that is why we call them cusps. Gluing triangles is done in a way compatible with the orientation of each triangle, 
and thus, this compact surface comes already triangularized. 
For each connected component $\Gamma_i$ of the graph, 
set $v_i=|V(\Gamma_i)|, e_i=|E(\Gamma_i)|$ and $n_i$ for the number of vertices, edges and cusps within, respectively.
Then an easy exercise shows that there are $2e_i$ faces, $3e_i$ edges and $v_i+n_i$ points. 
So, the corresponding surfaces $\surf(\Gamma_i)$ must be of genus $g_i=(2-v_i+e_i-n_i)/2$ 
when we think of the cusps as points that are removed from the surface.
This surfaces come with a natural orientation given by the tiles 
since they are naturally oriented and their orientations match each other because of the way we have glued them.

This construction can also be applied to semistable circles. Even though semistable circles have no vertices they still have half-edges and thus they also have two orientations corresponding to their boundary cycles. We glue the sides of each semi-infinite rectangle in order to obtain an infinite cylinder with two cusps. 
One may worry that since there is no vertex around, 
there is no precise way to know where to start gluing the rectangle. 
However, the choice of a base point becomes irrelevant because the moduli of semistable spheres is trivial.

A {\bf $P$-labeled ribbon graph} is a ribbon graph together with 
an injection $x: P \hookrightarrow V(\Gamma) \sqcup C(\Gamma)$ whose image contains all distinguished points, 
that is, cusps and vertices of valence one or two.  
The elements of the image will be called {\bf labeled points}. 

An {\bf isomorphism} of $P$-labeled ribbon graphs is a ribbon graph isomorphism that preserves the labels. 
In particular, the automorphism group of the semistable circle is trivial.

The {\bf Euler characteristic} of a $P$-labeled ribbon graph is defined as 
the Euler characteristic of the graph minus the number of vertices of valence one or two. 
The semistable circle is defined to have Euler characteristic equal to zero. 
This is the same as the Euler characteristic of the compact surface $\surf(\Gamma)$ made of triangles 
but taking away the cusps and points in the surface corresponding to vertices of valence one and two. 
The reason to do this is because those points will correspond with labeled points after collapsing subgraphs (which will be defined later). 

\begin{rem}
Clearly, if $\Gamma$ is a $P$-labeled ribbon graph then 
$\surf(\Gamma)$ inherits a $P$-labeling in the form of a function $x: P \hookrightarrow \surf(\Gamma)$. 
The topological type $(g,|P|)$ of a $P$-labeled ribbon graph refers to 
the genus $g$ of the generated surface and the number $|P|$ of labels. 
It is also easy to check that the Euler characteristic of the ribbon graph is 
the same as the Euler characteristic of the surface minus the cusps and points corresponding with vertices of valence one and two. 
Figure~\ref{simplegraphs} shows all ribbon graphs that generate a sphere with three labeled points. 
The two on the left are the only three-valent graphs, 
the first one is called 
{\bf theta graph} ($\Theta$) 
and let us call the second one {\bf omega graph} ($\Omega$).
\end{rem}

Fix a vertex $v$ in a ribbon graph. 
We can construct a new ribbon graph by replacing $v$ with a regular $|H_v|$-gon whose vertices are
attached to the elements of $H_v$ in the obvious manner. 
This can be done in a way in which we also induce 
cyclic orders on the new vertices created as in Figure~\ref{blowup}. 
The new ribbon graph is the {\bf blow-up} of $v$. 
This operation adds one extra boundary cycle to the ribbon graph. 
In the case of a vertex of valence one the blow-up adds a loop based at such vertex. 
In the case of valence two the blow-up replaces the vertex with two separate vertices 
and adds two new edges joining those edges forming a new boundary cycle where the old vertex was.

\begin{figure}
\begin{center}
\begin{tikzpicture}[scale=1.2]
\draw[fill=black] (0,0) circle (2pt);
\draw[thick] (1,0) -- (0,0);
\draw[thick] (0,0) -- (-0.5,0.866);
\draw[thick] (0,0) -- (-0.5,-0.866);
\end{tikzpicture}
\hspace{2cm}
\begin{tikzpicture}[scale=0.6]
\draw[fill=black] (1,0) circle (2.5pt);
\draw[fill=black] (-0.5,0.866) circle (2.5pt);
\draw[fill=black] (-0.5,-0.866) circle (2.5pt);

\draw[thick] (1,0) -- (-0.5,0.866);
\draw[thick] (-0.5,0.866) -- (-0.5,-0.866);
\draw[thick] (-0.5,-0.866) -- (1,0);
\draw[thick] (1,0) -- (2,0);
\draw[thick] (-0.5,0.866) -- (-1,1.733);
\draw[thick] (-0.5,-0.866) -- (-1,-1.733);
\end{tikzpicture}
\end{center}

\caption[Blowup]{The blow-up of a three-valent vertex.}  \label{blowup}
\end{figure}
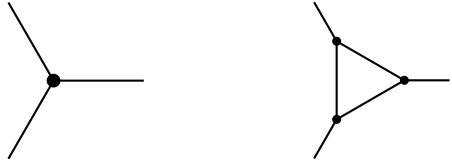

A boundary cycle is called {\bf injective} 
if any two half-edges in this orbit are not in the same orbit of $\sigma_0$ or $\sigma_1$. 
This implies that the boundary subgraph is homeomorphic to a circle. For example, the extra boundary cycle generated in the blow-up is always injective. A non-injective boundary cycle occurs for example this boundary cycle goes around a part of the graph that raises the genus of the graph.

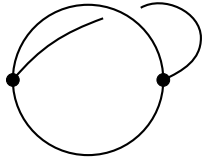
\begin{figure}

\begin{center}
\begin{tikzpicture}[scale=2]

\draw[thick] (0,0) circle (0.5);
\draw[thick] (-0.5,0) to [out=50, in=200] (0.1,0.41);
\draw[thick] (0.35,0.48) to [out=30 , in=90] (0.75,0.28) to [out=270, in=25] (0.5,0);
\draw[fill=black] (-0.5,0) circle (1.2pt);
\draw[fill=black] (0.5,0) circle (1.2pt);

\end{tikzpicture}
\end{center}

\caption[Torus graph]{The graph shown generates a torus with one label point. All half-edges of the graph form a boundary cycle and hence it is not injective.}  \label{blowup}
\end{figure}

By {\bf disjoint boundary cycles} we mean boundary cycles that do not share any 
half edges in the same orbit of $\sigma_0$ or $\sigma_1$. 
For them the associated boundary subgraphs do not intersect.

Given two disjoint boundary cycles with at least one of them being injective, 
we can produce a finite family of ribbon graphs as follows.

Since both boundary cycles correspond with subgraphs that can be identified with CW-complexes themselves, 
choose parametrizations of each subgraph by $\S^1$. 
The parametrization of the subgraph associated to the injective boundary cycle 
must be compatible with the natural counter-clockwise orientation of $\S^1 \subset \C$, \emph{i.e.}, it must follows the cyclic order of the boundary cycle. The other subgraph is parametrized with the opposite orientation. We glue both subgraphs by identifying points with the same parametrization. This gives a new graph called a {\bf gluing}. 
This resulting graph have as set of vertices the union of the vertices of the two previous graphs (where some of them might be identified due to coinciding 
in the parametrization) with possibly new edges. Those new edges are generated 
when a vertex of one of the subgraphs is glued to the interior of an edge of the other edge, thus splitting the edge. 
It is worthwhile to note that the euler characteristic is additive under this gluing operation because topologically this operation is the same as the connected sum applied to the surfaces associated to the corresponding graphs. 
For the ribbon graph structure, the cyclic order on each vertex induces a cyclic order on each vertex of the gluing in a natural way 
(in case two vertices are glued together, one from each subgraph, 
a new cyclic order on the resulting vertex is created by the concatenation of the original cyclic orders: this can be done in a well defined manner due to the parametrizations of the boundary cycles). The new edges thus created can be divided in half-edges and the parametrizations used so far can be forgetten altogether if we keep track of the combinatorial cyclic data. Hence, this process results in a new ribbon graph depending on the parametrization thus chosen.
(This is why disjointness and injectivity of the boundary cycles was introduced). 
An example is presented in Figure~\ref{gluings}. 
There is also a way to define this gluing construction in a purely combinatorial way 
but it lacks the geometrical intuition given by the present one.

In order to produce a family of ribbon graphs we can change the parametrizations and keep only one representative from each isomorphism class of ribbon graph thus created. Since there is a bound on the size of the resulting graphs, and the possible combinatorics for each size are also finite, there will be only a finite number of isomorphism classes. 

\begin{df}
Given a vertex and a boundary cycle whose associated graph does not include the given vertex, we define a {\bf gluing} by applying the gluing construction to 
the blow-up of the vertex and the given boundary cycle. \label{gluing}
\end{df}

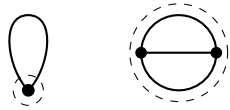
\begin{figure} 

\begin{center}
\begin{tikzpicture}

\draw[thick,yshift=-0.5cm,rotate=270] (0,0) to [out=135, in=90] (-1,0) to [out=270,in=225] (0,0);
\draw[fill=black,thick] (0,-0.5) circle (2pt);
\draw[dashed] (0,-.5) circle (0.2cm);

\draw[thick,xshift=2cm] (0,0) circle (0.5);
\draw[thick,xshift=2cm] (-0.5,0) -- (0.5,0);
\draw[fill=black,xshift=2cm] (-0.5,0) circle (2pt);
\draw[fill=black,xshift=2cm] (0.5,0) circle (2pt);
\draw[dashed, xshift=2cm] (0,0) circle (0.65cm);

\end{tikzpicture}
\end{center}

\caption[twographs]
{Two disjoint ribbon graphs before the gluing process. 
The vertex to be blown-up and the boundary cycle that will be glued are highlighted by the dashed circles.} \label{twographs}
\end{figure}

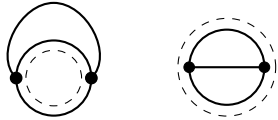
\begin{figure}
\begin{center}
\begin{tikzpicture}

\draw[thick] (-0.5,0) to [out=135, in=180] (0,1) to [out=0,in=45] (0.5,0);
\draw[fill=black,thick] (-0.5,0) circle (2pt);
\draw[fill=black,thick] (0.5,0) circle (2pt);

\draw[thick] (0,0) circle (0.5);
\draw[fill=black] (-0.5,0) circle (2pt);
\draw[fill=black] (0.5,0) circle (2pt);
\draw[dashed] (0,0) circle (0.37cm);

\end{tikzpicture}
\qquad
\begin{tikzpicture}

\draw[thick,xshift=2cm] (0,0) circle (0.5);
\draw[thick,xshift=2cm] (-0.5,0) -- (0.5,0);
\draw[fill=black,xshift=2cm] (-0.5,0) circle (2pt);
\draw[fill=black,xshift=2cm] (0.5,0) circle (2pt);
\draw[dashed, xshift=2cm] (0,0) circle (0.65cm);

\end{tikzpicture}

\end{center}

\caption[blowupdisjoint]
{After the blow-up of the highlighted vertex 
in Figure~\ref{twographs}, left, we obtain two disjoint boundary cycles as highlighted 
in the figure.} \label{blowupdisjoint}
\end{figure}

\begin{figure}
\begin{center}
\begin{tikzpicture}

\draw[thick] (-0.5,0) to [out=135, in=180] (0,1) to [out=0,in=45] (0.5,0);
\draw[fill=black,thick] (-0.5,0) circle (2pt);
\draw[fill=black,thick] (0.5,0) circle (2pt);

\draw[thick] (0,0) circle (0.5);
\draw[fill=black] (-0.5,0) circle (2pt);
\draw[fill=black] (0.5,0) circle (2pt);

\draw[fill=black] (-0.3546,-0.3546) circle (2pt);
\draw[fill=black] (0.3546,-0.3546) circle (2pt);

\draw[thick] (-0.3546,-0.3546) to [out=45, in=135] (0.3546,-0.3546);

\end{tikzpicture}
\qquad
\begin{tikzpicture}

\draw[thick] (-0.5,0) to [out=135, in=180] (0,1) to [out=0,in=45] (0.5,0);
\draw[fill=black,thick] (-0.5,0) circle (2pt);
\draw[fill=black,thick] (0.5,0) circle (2pt);

\draw[thick] (0,0) circle (0.5);
\draw[thick] (0,-0.5) -- (0,0.5);
\draw[fill=black] (-0.5,0) circle (2pt);
\draw[fill=black] (0.5,0) circle (2pt);

\draw[fill=black,thick] (0,-0.5) circle (2pt);
\draw[fill=black,thick] (0,0.5) circle (2pt);

\end{tikzpicture}
\qquad
\begin{tikzpicture}

\draw[thick] (-0.5,0) to [out=135, in=180] (0,1) to [out=0,in=45] (0.5,0);
\draw[fill=black,thick] (-0.5,0) circle (2pt);
\draw[fill=black,thick] (0.5,0) circle (2pt);

\draw[thick] (0,0) circle (0.5);
\draw[fill=black] (-0.5,0) circle (2pt);
\draw[fill=black] (0.5,0) circle (2pt);

\draw[fill=black] (-0.3546,0.3546) circle (2pt);
\draw[fill=black] (0.3546,0.3546) circle (2pt);

\draw[thick] (-0.3546,0.3546) to [out=315, in=225] (0.3546,0.3546);

\end{tikzpicture}
\end{center}
\caption[Gluings]
{A few members of the family of ribbon graphs resulting from the gluing process of the previous 
two graphs. 
Here are shown only the trivalent members of that family.} \label{gluings}
\end{figure}
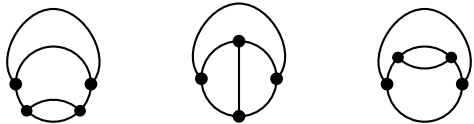

This construction is well defined because the blow-up is injective and 
the condition of the vertex being disjoint from the boundary cycle 
implies that the boundary cycles are disjoint. Compare to Figure~\ref{twographs}. 
\bigskip

\subsection{Semistable Ribbon Graphs}

Let us describe two ribbon graphs we can obtain from a proper subset of edges $Z \subset E(\Gamma)$. 
One will be associated to $Z$ and the other to its complement in $E(\Gamma)$. 
Denote by $\Gamma_Z$ the subgraph with set of edges $Z$ and $H_Z$ its set of half-edges. 
The ribbon graph structure is induced by $\sigma_0$ and $\sigma_1$ in the following way. 
The new $\sigma^{\Gamma_Z}_1$ is just the restriction, 
while $\sigma^{\Gamma_Z}_0$ is defined by declaring $\sigma^{\Gamma_Z}_0(h)$, with $h \in H_Z$, 
to be the first term in the sequence $(\sigma_0^k(h))_{k>0}$ that is in $H_Z$.

The proper subset $Z \subset E(\Gamma)$ of edges of a ribbon graph 
also induces a ribbon graph structure on the graph determined by the complement of $Z$ in $E(\Gamma)$. 
We will denote this graph by $\Gamma / \Gamma_Z$. 
The new graph has set of edges $E(\Gamma) - Z$ with induced set of half-edges $H_{\Gamma/\Gamma_Z}$. 
Since $\sigma_1$ and $\sigma_\infty$ completely determine the ribbon graph structure, 
it is enough to define them in $H_{\Gamma/\Gamma_Z}$. 
The new involution is just the restriction $\sigma^{\Gamma/\Gamma_Z}_1= \sigma_1 |_{H_{\Gamma/\Gamma_Z}}$. 
Given $h \in H_{\Gamma/\Gamma_Z}$ we define $\sigma^{\Gamma/\Gamma_Z}_\infty (h)$ to be 
the first term of the sequence $(\sigma_\infty^k(h))_{k>0}$ in $H_{\Gamma/\Gamma_Z}$.

A few remarks about the nature of $\Gamma / \Gamma_Z$ are in order. 
For instance, if $Z$ is just one edge with two different vertices (and hence not a loop) then $\Gamma / \Gamma_Z$ 
is topologically the result of collapsing that edge to a point as in Figure~\ref{collapseedge}, 
where two vertices are amalgamated into one creating a new cyclic order around that vertex. 
The same is true if one replaces a single edge with any set of edges $Z$ such that $\Gamma_Z$ is simply connected. 
This can be shown by induction on the number of elements of $Z$. 

\begin{figure} 
\begin{center}
\begin{tikzpicture}
\draw[thick] (0,0) to [out=135, in=90] (-1,0) to [out=270,in=225] (0,0);
\draw[thick,xshift=0.5cm,rotate=180] (0,0) to [out=135, in=90] (-1,0) to [out=270,in=225] (0,0);
\draw[thick] (0,0) -- (0.5,0);
\draw[fill=black,thick] (0,0) circle (2pt);
\draw[fill=black,thick] (0.5,0) circle (2pt);

\draw[thick,xshift=5cm] (0,0) to [out=135, in=90] (-1,0) to [out=270,in=225] (0,0);
\draw[thick,xshift=5cm,rotate=180] (0,0) to [out=135, in=90] (-1,0) to [out=270,in=225] (0,0);
\draw[fill=black,thick,xshift=5cm] (0,0) circle (2pt);

\node at (0.3,0.5) {$e$};
\node at (0.3,-0.9) {$\Gamma$};
\node at (5.2,-0.9) {$\Gamma/\Gamma_{\{ e \}}$};
\end{tikzpicture}
\end{center}
\caption[Collapsing an edge]
{Collapsing an edge that is not a loop.} \label{collapseedge}
\end{figure}
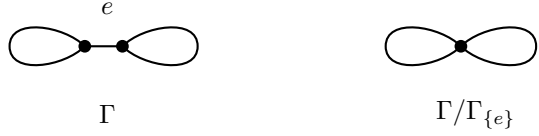

However, in general, $\Gamma / \Gamma_Z$ is not a topological quotient of $\Gamma$ over $Z$. 
Figure~\ref{nottopquo} shows an example of this last case. 

It turns out that this definition allows us to track the creation of nodes at the graph level. 
In this last example the collapse of a loop creates a new vertex and the cyclic order around the only vertex in $\Gamma$ splits into two.

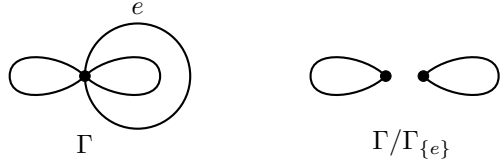
\begin{figure} 
\begin{center}
\begin{tikzpicture}
\draw[thick] (0,0) to [out=135, in=90] (-1,0) to [out=270,in=225] (0,0);
\draw[thick, rotate=180] (0,0) to [out=135, in=90] (-1,0) to [out=270,in=225] (0,0);
\draw[thick] (0.7,0) circle (0.7);
\draw[fill=black] (0,0) circle (2pt);
\node at (0.7,0.9) {$e$};
\node at (0,-0.9) {$\Gamma$};

\draw[thick, xshift=4cm] (0,0) to [out=135, in=90] (-1,0) to [out=270,in=225] (0,0);
\draw[fill=black, xshift=4cm] (0,0) circle (2pt);
\draw[thick, rotate=180, xshift=-4.5cm] (0,0) to [out=135, in=90] (-1,0) to [out=270,in=225] (0,0);
\draw[fill=black, xshift=4.5cm] (0,0) circle (2pt);
\node at (4.35,-0.9) {$\Gamma/\Gamma_{\{ e \}}$};
\end{tikzpicture}
\end{center}
\caption[Not a topological quotient]
{The original graph has one vertex while the second one has two and is disconnected.} \label{nottopquo}
\end{figure}

We now describe how to collapse edges in a $P$-labeled ribbon graph 
without changing the homeomorphism type of $\surf(\Gamma)$ relative to $P$.

A subset $Z \subset E(\Gamma)$ of a $P$-labeled ribbon graph $\Gamma$ is called {\bf negligible} 
if each connected component of $\Gamma_Z$ is either 
a tree with at most one labeled point or 
a homotopy circle without labeled points that contains a boundary subgraph.

\begin{df}
If $\Gamma$ is a $P$-labeled ribbon graph and $Z \subset E(\Gamma)$ is a negligible subset 
define the {\bf edge collapse} of $\Gamma$ respect to $Z$ as 
$\Gamma/Z = \Gamma/\Gamma_Z$ with the induced $P$-labeling. \label{edgecollap}
\end{df}

\begin{rem}
Collapsing a tree with at most one labeled point does not change the injectivity of the labels. 
Collapsing a subgraph that is homotopic to a circle without labeled points and contains a boundary subgraph 
is called a {\bf total collapse} 
and in this case the label of the cusp corresponding with the boundary subgraph 
switches into a label of the induced vertex after that collapse. 
The injectivity of this labeling is still 
preserved.
\end{rem}

\begin{lem}
If $Z$ is negligible then $\surf(\Gamma) \cong  \surf(\Gamma/Z)$ relative to $P$.
\end{lem}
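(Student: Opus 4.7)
The plan is to identify, for each connected component of $\Gamma_Z$, a closed disk in $\surf(\Gamma)$ whose collapse to a point realises the combinatorial edge collapse at the surface level; the classical fact that collapsing a closed disk in a closed surface yields a homeomorphic surface then closes the argument. Since the edge collapse with respect to a disjoint union $Z_1 \sqcup Z_2$ of subgraphs agrees with the successive collapses for $Z_1$ and $Z_2$, it suffices to treat one connected component of $\Gamma_Z$ at a time, so I assume $\Gamma_Z$ is connected.

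Suppose first that $\Gamma_Z$ is a tree carrying at most one labeled vertex. I would proceed by induction on $|E(\Gamma_Z)|$. For a single non-loop edge $e$ with half-edges $h$ and $\sigma_1(h)$, the two oriented triangles $K_{\v{e}}$ and $K_{\sigma_1(\v{e})}$, glued along their common base in the construction of $\surf(\Gamma)$, make up a closed bigon $D \subset \surf(\Gamma)$ whose interior contains no cusp and no edge other than $e$. The combinatorial recipe for $\Gamma/\{e\}$, which restricts $\sigma_1$ to $H \setminus \{h,\sigma_1(h)\}$ and produces the amalgamated cyclic order at the merged vertex by splicing the original cyclic orders through $\sigma_\infty$, is precisely the triangulation that $\surf(\Gamma)$ inherits after identifying $D$ to a point. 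Any labeled endpoint of $e$ is carried to the image of $D$, so the homeomorphism respects the $P$-labelling. The inductive step is handled by peeling off a leaf of the tree and applying the base case.

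Now suppose $\Gamma_Z$ is a homotopy circle carrying no labels but containing a boundary subgraph of $\Gamma$. The cusp $c$ associated with this boundary subgraph admits a closed disk neighbourhood $D_0 \subset \surf(\Gamma)$ whose combinatorial boundary on the surface traces the boundary subgraph. Because $H_1(\Gamma_Z) \cong \Z$ is generated by this cycle, any remaining edges of $\Gamma_Z$ form trees attached to the boundary subgraph; absorbing regular neighbourhoods of these trees into $D_0$ as in the tree case produces a closed disk $D \supset D_0$ whose only interior distinguished point is $c$. Collapsing $D$ to a point is again a homeomorphism of surfaces, and the label formerly at $c$ is transported to the new vertex of $\Gamma/Z$, matching the total collapse described in the remark above. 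The main technical obstacle is the bookkeeping step that verifies that the combinatorial $\Gamma/\Gamma_Z$, defined through the restricted $\sigma_1$ and the iterates $\sigma_\infty^k$, genuinely records the triangulation of $\surf(\Gamma)$ induced by the disk collapse; in particular one must check that the new cyclic order at the amalgamated vertex, built by concatenating several old cyclic orders and splicing in the surviving half-edges, is exactly the one read off from the collapsed surface.
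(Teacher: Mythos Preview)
There is a genuine gap in your treatment of a single tree edge $e$. The region $D = K_{\vec{e}} \cup K_{\sigma_1(\vec{e})}$ is a quadrilateral, not a bigon: its four corners are the two endpoints of $e$ together with the two cone points (apexes) of the triangles, and those apexes are \emph{cusps} of $\surf(\Gamma)$. Since every cusp lies in the image of the $P$-labeling, collapsing $D$ to a single point identifies two labeled cusps with each other and with the merged vertex, so the induced map $P\to\surf(\Gamma)/D$ is no longer injective. Thus $\surf(\Gamma)/D$ is \emph{not} $\surf(\Gamma/\{e\})$ relative to $P$: in the latter the two boundary cycles adjacent to $e$ remain distinct cusps. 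Your assertion that ``the combinatorial recipe for $\Gamma/\{e\}$ \ldots is precisely the triangulation that $\surf(\Gamma)$ inherits after identifying $D$ to a point'' is therefore false. (And when the two half-edges of $e$ happen to lie in the same boundary cycle, the two apexes coincide in $\surf(\Gamma)$, so $D$ is not even an embedded disk and the disk-collapse lemma does not apply.)

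The paper avoids this by collapsing only the edge $e$ itself and letting each adjacent triangle $K_{\vec{e}}$ degenerate to the interval joining its apex cusp to the merged vertex; this keeps the cusps apart and yields the triangulation of $\surf(\Gamma/\{e\})$ on the nose. Your overall strategy is salvageable if you replace $D$ by a small regular neighbourhood of $e$ in $\surf(\Gamma)$ disjoint from all cusps, rather than the full pair of triangles. The same defect recurs in your homotopy-circle case when you ``absorb regular neighbourhoods of these trees'': the triangles over the attached tree edges carry their own apex cusps, and those must be kept out of whatever disk you collapse. Your treatment of the pure boundary-subgraph disk $D_0$, which contains exactly the one cusp $c$ destined to become the new vertex label, is the part that does work as written.
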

\begin{proof}
It is possible to exhibit a sequence of homeomorphisms starting at $\surf(\Gamma)$ and ending at $\surf(\Gamma/Z)$. 
If a connected component of $\Gamma_Z$ is a tree with at most one labeled point let $e$ be an edge in said tree. 
As $e$ is contracted, the result on the associated surface is to contract 
the triangle $K_{\vec{e}}$ to an interval 
(one vertex goes to one vertex of the interval and 
the edge opposite to this vertex is contracted to the other vertex of the interval). 
This can be done to all edges of the tree without changing the injectivity of the labels. 
The same can be done on a homotopy circle without labeled points that contains a boundary subgraph. 
The difference is that in the last step we have a loop being contracted to the vertex at its base 
which will now have a label. 
This collapse also respects the injectivity of the labels because 
the homotopy circle did not have a labeled point on it. 
Such process does not change the homeomorphism type of the surface.
\end{proof}

It is also possible to collapse more general graphs 
by allowing only mild degenerations. If more arbitrary subsets of edges are collapsed, the homeomorphism type is not preserved but it can shown that the singularities thus obtained resemble the well-behaved double-point singularities of stable surfaces.  We start with the following notion.

A proper set of edges $Z$ is {\bf semistable} 
if no component of $\Gamma_Z$ is the set of edges of 
a negligible subset and every univalent vertex of $\Gamma_Z$ is labeled.  

\begin{rem} \label{trees}
If $Z$ is semistable then 
every contractible component of $\Gamma_Z$ contains at least two labeled points (otherwise it would be negligible). 
A component that is a homotopy circle without labeled vertices is necessarily a topological circle 
because univalent vertices must be labeled. 
It is also not a boundary subgraph of $\Gamma$ or else it would be negligible. 
\end{rem}

\begin{lem}
Given a ribbon graph $\Gamma$ every proper subset $Z \subset E(\Gamma)$ 
contains a unique maximal semistable subset $Z^{sst} \subset Z$. \label{maxsemis}
\end{lem}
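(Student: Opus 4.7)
The plan is to reduce the whole statement to a single closure property: if $Z_1, Z_2 \subset Z$ are both semistable, then $W := Z_1 \cup Z_2$ is also semistable. Granted this closure, since the empty set is vacuously semistable and $E(\Gamma)$ is finite, one may define $Z^{sst}$ to be the union of all semistable subsets of $Z$; this union is itself semistable by closure, is manifestly maximal, and the maximality forces it to contain every other semistable subset of $Z$, so it is unique.

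For the univalence condition on $W$, suppose $v$ is a univalent vertex of $\Gamma_W$. Its unique incident $W$-edge belongs to some $Z_i$, and any other $Z_i$-edge at $v$ would also lie in $W$ and contradict univalence. Hence $v$ is univalent in $\Gamma_{Z_i}$, so it is labeled by semistability of $Z_i$. For the non-negligibility condition on $W$, I would assume for contradiction that some component $C$ of $\Gamma_W$ is negligible. If $C$ is a tree with at most one labeled point, pick any edge $e \in C$ with (say) $e \in Z_1$. Because $Z_1 \subset W$ and $C$ is a full component of $\Gamma_W$, every $Z_1$-edge at a vertex of $C$ remains inside $C$, so the component $D$ of $\Gamma_{Z_1}$ through $e$ is an entire subtree of $C$, inheriting at most one labeled point and violating semistability of $Z_1$. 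If instead $C$ is a homotopy circle without labeled points that contains a boundary subgraph, the univalence condition for $W$ just established, combined with the absence of labels on $C$, forces every vertex of $C$ to have valence at least two in $\Gamma_W$; together with first Betti number one, the counts $E = V$ and $2E \geq 2V$ force equality at every vertex, so $C$ is actually a topological cycle. If all edges of $C$ lie in $Z_1$, the same reasoning as in the tree case makes $C$ itself a component of $\Gamma_{Z_1}$ and hence a negligible component, contradicting semistability of $Z_1$. Otherwise, traversing the cycle produces a vertex $v$ at which a $Z_1$-only edge meets a non-$Z_1$ edge, so $v$ is univalent in $\Gamma_{Z_1}$ yet unlabeled, again a contradiction.

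The main obstacle, and the one non-formal step, is the homotopy-circle subcase of the component condition: non-negligibility of components does not descend to unions in any purely local way. The trick that makes the argument go through is to feed the univalence condition for $W$, which has already been proved, back into the analysis of $C$: this rules out any leaves in $C$ and upgrades the abstract homotopy circle to a genuine topological cycle, after which a straightforward cycle-traversal argument locates a forbidden univalent unlabeled vertex in one of the $\Gamma_{Z_i}$.
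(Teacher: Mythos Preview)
Your argument is correct, but it proceeds along a genuinely different axis from the paper's. The paper gives a \emph{constructive} proof: starting from $Z$, iteratively delete edges that carry an unlabeled univalent vertex, then discard any remaining boundary subgraphs with no labeled vertices; the residue is declared to be $Z^{sst}$, and uniqueness is asserted to follow from the construction (in particular from the observation that the residue is empty precisely when $Z$ is negligible). Your proof is \emph{structural}: you establish that semistable subsets of $Z$ are closed under union, and then take $Z^{sst}$ to be the union of all of them. The trade-offs are the expected ones. The paper's pruning algorithm is operationally useful later (it is exactly what one does in practice, and it makes the equivalence ``$Z^{sst}=\varnothing \Leftrightarrow Z$ negligible'' immediate, which is invoked in Remark~\ref{splitcollapse}), but its justification of maximality and uniqueness is left implicit. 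Your approach makes existence, maximality, and uniqueness transparent in one stroke and requires no iteration, at the cost of not producing an explicit description of $Z^{sst}$. One small cosmetic point: in both the tree and cycle subcases you tacitly assume some edge of the offending component lies in $Z_1$; of course if none does, the symmetric argument with $Z_2$ applies, but it would read more cleanly to say ``without loss of generality'' at that point.
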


\begin{proof}
Starting from $Z$ remove all edges containing an unlabeled vertex of valence one. 
Repeat this process until we cannot delete any further edges. 
All remaining univalent vertices are labeled. 
Now throw away all boundary subgraphs with no labeled vertices. The order here is important: first prunning away edges and then removing boundary subgraphs. This is because $Z$ can have components that are the union of boundary subgraphs with edges containing an unlabeled vertex of valence one, hence doing it in the opposite order would not remove at the end the boundary subgraph.

\begin{figure}
\begin{center}

\begin{tikzpicture}[scale=0.6]
\draw[fill=black] (1,0) circle (2.5pt);
\draw[fill=black] (-0.5,0.866) circle (2.5pt);
\draw[fill=black] (-0.5,-0.866) circle (2.5pt);

\draw[thick] (1,0) -- (-0.5,0.866);
\draw[thick] (-0.5,0.866) -- (-0.5,-0.866);
\draw[thick] (-0.5,-0.866) -- (1,0);
\draw[thick] (1,0) -- (2,0);
\draw[thick] (-0.5,0.866) -- (-1,1.733);
\draw[thick] (-0.5,-0.866) -- (-1,-1.733);

\node at (3.5,0) {$\longrightarrow$};
\node at (9,0) {$\longrightarrow$};
\node at (11,0) {$\varnothing$};

\draw[xshift=6cm, fill=black] (1,0) circle (2.5pt);
\draw[xshift=6cm, fill=black] (-0.5,0.866) circle (2.5pt);
\draw[xshift=6cm, fill=black] (-0.5,-0.866) circle (2.5pt);

\draw[xshift=6cm, thick] (1,0) -- (-0.5,0.866);
\draw[xshift=6cm, thick] (-0.5,0.866) -- (-0.5,-0.866);
\draw[xshift=6cm, thick] (-0.5,-0.866) -- (1,0);
%\draw[thick] (1,0) -- (2,0);
%\draw[thick] (-0.5,0.866) -- (-1,1.733);
%\draw[thick] (-0.5,-0.866) -- (-1,-1.733);
\end{tikzpicture}

\end{center}

\caption[Blowup]{Turning a set $Z$ into $Z^{sst}$. The graph on the left has no labeled points and so in this case $Z^{sst}=\varnothing$.}  \label{pruning}
\end{figure}
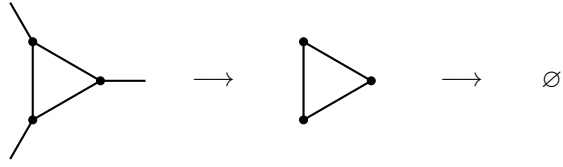

At the end what remains is $Z^{sst}$ and we have  $Z^{sst} = \varnothing$ if and only if $Z$ is negligible (see Figure~\ref{pruning}). 
The uniqueness of $Z^{sst}$ follows from the construction.
\end{proof}

Let $Z$ be a semistable subset of $\Gamma$. 
The {\bf reduction} of $\Gamma_Z$ is 
the result of deleting unlabeled vertices of valence two. 
This does not change the Euler characteristic since the vertices are unlabeled. We denote the reduction by $\hat{\Gamma}_Z$. 
The reduction of a homotopy circle with no labeled vertices corresponding with a semistable subset 
is in fact a semistable circle.

A semistable subset $Z$ is {\bf stable} if 
every component of $\Gamma_Z$ that is a topological circle contains a labeled vertex. 
An arbitrary proper subset $Z$ contains a unique maximal stable subset $Z^{st}$, 
which is obtained from $Z^{sst}$ by getting rid of the components that are topological circles without labeled vertices.
\medskip 

If $\Gamma$ is a $P$-labeled ribbon graph and $Z \subset E(\Gamma)$ is an arbitrary subset,  
define the {\bf edge collapse of $\Gamma$ respect to $Z$} 
as the disjoint union \[ \Gamma/Z = \Gamma/\Gamma_Z  \sqcup \hat{\Gamma}_{Z^{sst}} \] with the induced $P$-labeling.

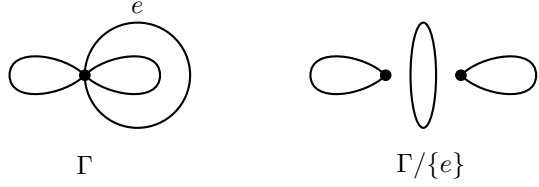
\begin{figure} 
\begin{center}
\begin{tikzpicture}
\draw[thick] (0,0) to [out=135, in=90] (-1,0) to [out=270,in=225] (0,0);
\draw[thick, rotate=180] (0,0) to [out=135, in=90] (-1,0) to [out=270,in=225] (0,0);
\draw[thick] (0.7,0) circle (0.7);
\draw[fill=black] (0,0) circle (2pt);
\node at (0.7,0.9) {$e$};
\node at (0,-1.2) {$\Gamma$};

\draw[thick, xshift=4cm] (0,0) to [out=135, in=90] (-1,0) to [out=270,in=225] (0,0);
\draw[fill=black, xshift=4cm] (0,0) circle (2pt);
\draw[thick, rotate=180, xshift=-5cm] (0,0) to [out=135, in=90] (-1,0) to [out=270,in=225] (0,0);
\draw[fill=black, xshift=5cm] (0,0) circle (2pt);
\node at (4.6,-1.2) {$\Gamma/{\{ e \}}$};
\draw[thick] (4.5,0) ellipse (0.17cm and 0.7cm);
\end{tikzpicture}
\end{center}
\caption[exampleofcollapse]
{Edge collapse of $\Gamma$ respect to the loop formed by $\{e\}$. 
On the right we have $\Gamma/\{ e\} = \Gamma/\Gamma_{\{e\}} \cup \hat{\Gamma}_{\{e\}^{sst}}$ 
where $\hat{\Gamma}_{\{e\}^{sst}}$ corresponds with the semistable circle.} \label{nottopquo}
\end{figure}

\begin{rem}
This generalizes Definition~\ref{edgecollap} because when $Z$ is negligible, 
we should have $Z^{sst} = \varnothing$ by Lemma~\ref{maxsemis}. 
Also notice that if $Z_1 \cap Z_2 = \varnothing$  
then $\Gamma / (Z_1 \sqcup Z_2) = (\Gamma/Z_1)/Z_2$. \label{splitcollapse}
\end{rem}

The next step is to introduce a generalization of ribbon graphs that will 
give a cellular decomposition of the decorated moduli space of semistable Riemann surfaces. 
This is similar to Looijenga's definition in \cite[Section 9.1]{loo}
but some changes were required. 

Let $Z$ be semistable. As $Z$ collapses this generates a new graph with possibly new vertices and new boundary cycles that will be call exceptional (see Figure~\ref{exceptionalv}). 
A vertex in $\Gamma/\Gamma_Z$ is represented by an orbit $V \in \sigma^{\Gamma/\Gamma_Z}_0$. 
If any of the elements in $V$ is the image under $\sigma_0$ of an element of $H_Z$ we call that vertex {\bf exceptional}. This means that some of the half-edges associated to this vertex belong to $H_Z$ and some do not.
In such case there is a corresponding orbit $B \in \sigma^{\Gamma_Z}_\infty$ that is not an orbit of $\sigma_\infty$ and has half-edges in $H_Z$. Such $B \in \sigma^{\Gamma_Z}_\infty$ is called an {\bf exceptional boundary cycle} and the associated subgraph an {\bf exceptional boundary subgraph}. 
Intuitively the exceptional vertex and exceptional boundary cycle correspond 
with the two points resulting from the normalization of a double node singularity in $\surf(\Gamma)$ after collapsing $Z$.

\begin{figure}
\begin{center}

\begin{tikzpicture}

\draw[thick] (-0.5,0) to [out=135, in=180] (0,1) to [out=0,in=45] (0.5,0);
\draw[fill=black,thick] (-0.5,0) circle (2pt);
\draw[fill=black,thick] (0.5,0) circle (2pt);

\draw[thick] (0,0) circle (0.5);
\draw[thick] (0,-0.5) -- (0,0.5);
\draw[fill=black] (-0.5,0) circle (2pt);
\draw[fill=black] (0.5,0) circle (2pt);

\draw[fill=black,thick] (0,-0.5) circle (2pt);
\draw[fill=black,thick] (0,0.5) circle (2pt);

\node at (1.8,0) {$\longrightarrow$};

\draw[xshift=3cm, thick,yshift=-0.5cm,rotate=270] (0,0) to [out=135, in=90] (-1,0) to [out=270,in=225] (0,0);
\draw[xshift=3cm, fill=black,thick] (0,-0.5) circle (2pt);
\draw[xshift=3cm, dashed] (0,-.5) circle (0.2cm);

\draw[thick,xshift=5cm] (0,0) circle (0.5);
\draw[thick,xshift=5cm] (0,-0.5) -- (0,0.5);
\draw[fill=black,xshift=5cm] (0,-0.5) circle (2pt);
\draw[fill=black,xshift=5cm] (0,0.5) circle (2pt);
\draw[dashed, xshift=5cm] (0,0) circle (0.65cm);

\node at (0,-1.2) {$\Gamma$};
\node at (3,-1.2) {$\Gamma/\Gamma_Z$};
\node at (4,-1.2) {$\sqcup$};
\node at (5.3,-1.2) {$\hat{\Gamma}_{Z^{sst}}$};

\end{tikzpicture}

\end{center}
\caption[Exceptional vertex and boundary component]
{On the left $Z$ represents the set of edges of the theta subgraph. On the right, after the collapse of $Z$, the exceptional vertex and exceptional boundary component are highlighted} \label{exceptionalv}
\end{figure}
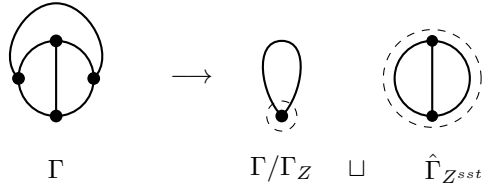

Consider an involution $\iota$ without fixed points on a subset $N \subset V(\Gamma) \sqcup C(\Gamma)$. 
The elements of $N$ will be called {\bf nodes}. 
Two elements of the same orbit are {\bf associated} and, in this case, 
we may also say that the corresponding connected components of the graph are associated. 
{\bf Cusp-nodes} and {\bf vertex-nodes} are defined in an obvious way. 
This involution allows us to identify points in $\surf(\Gamma)$. 
Denote by $\surf(\Gamma,\iota)$ the surface obtained from the previous identification. 
Let $\Gamma = \cup_{i \in {\mathcal I}} \Gamma_i$, where the $\Gamma_i$'s are the connected components of $\Gamma$. 
Thus we have $\pi_0(\Gamma) = \{ [\Gamma_i] \} \cong {\mathcal I}$. 
Set $V_i=V(\Gamma_i)$, $C_i=C(\Gamma_i)$ and $N'_i = N(\Gamma_i)$ for the vertices, cusps and nodes 
in the $i^{\text{th}}$ component of $\Gamma$. 

From now on we will only consider graphs with involutions for which the following properties apply. 

\begin{enumerate}
\item A connected component of the graph cannot be associated to itself.
\item Two semistable circles cannot be associated.
\item The two cusps of a semistable circle are nodes.
\item A cusp-node can only be associated to a vertex-node and vice versa.
\item The surface $\surf(\Gamma,\iota)$ must be connected.
\end{enumerate}

Let $\N=\{0,1,2,... \}$. 
An {\bf order} for $\Gamma$ is a function $\ord: \pi_0 (\Gamma) \to \N$ that satisfies the following properties.

\begin{enumerate}
\item[(i)] If $\ord([\Gamma_i])=k>0$ then there exist $j$ such that $\ord([\Gamma_j])=k-1$.
\item[(ii)] Let $p \in N'_i$ and $q=\iota(p) \in N'_j$. 
Then $p \in C_i$ if and only if  
$q \in V_j$ by property (4) on the previous list. 
In this case we require also that 
$\ord([\Gamma_j])<\ord([\Gamma_i])$
holds.
\end{enumerate}

The {\bf order} of an edge, half-edge, vertex or cusp is the order of the connected component where they sit. 

The next result gives some insight into the order function and is not hard to prove.

\begin{lem} \label{order}
Given an order $\ord: \pi_0(\Gamma) \to \N$ the following properties hold.

\begin{enumerate}
\item If $p\in N'_i$ and $\ord([\Gamma_i])=0$ then $p \in V$.
\item There is a constant $m \in \N$ such that $\ord([\Gamma_i])\le m$ for all $i$ and 
given $k$ such that $0 \le k \le m$ there exist $i$ with $\ord([\Gamma_i])=k$.\hfill$\square$
\end{enumerate}
\end{lem}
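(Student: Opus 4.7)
The plan is to verify both assertions directly from the two axioms defining an order on $\pi_0(\Gamma)$, without any detour through the geometry of $\surf(\Gamma,\iota)$. Part (1) will be a short contradiction argument exploiting the non-negativity of $\N$, and part (2) will combine the finiteness of $\pi_0(\Gamma)$ with a descent using axiom (i).

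For part (1), I would suppose $p \in N'_i$ with $\ord([\Gamma_i])=0$ and, aiming at a contradiction, assume $p \notin V$. Since $N \subset V(\Gamma) \sqcup C(\Gamma)$, this forces $p \in C_i$, i.e.\ $p$ is a cusp-node. Property (4) of the allowed involutions then gives $q=\iota(p) \in V_j$ for some $j$, and axiom (ii) of $\ord$ demands $\ord([\Gamma_j])<\ord([\Gamma_i])=0$, which is impossible because $\ord$ takes values in $\N$. Hence $p \in V$.

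For part (2), the graph $\Gamma$ is finite, so $\pi_0(\Gamma)$ is finite and the image of $\ord$ is a finite subset of $\N$; let $m$ be its maximum. To show that the image is exactly $\{0,1,\dots,m\}$, I would pick a component $[\Gamma_{i_0}]$ of order $m$ and then iterate axiom (i): once a component of order $k>0$ is in hand, that axiom produces one of order $k-1$. A downward induction of length $m$ then exhibits components of every order from $m$ down to $0$, and the maximality of $m$ rules out higher values.

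Candidly, I do not expect a genuine obstacle here: both claims are essentially unpackings of the definition of order, and the proof should fit in a few lines. The only real input is the fact that $\N$ contains no negative integers, which both forbids cusp-nodes at order $0$ in part (1) and ensures that the descent in part (2) terminates at $0$ rather than running forever.
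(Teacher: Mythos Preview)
Your argument is correct and is precisely the kind of direct unpacking of the axioms that the paper has in mind; in fact the paper omits the proof entirely (the $\square$ is placed at the end of the statement), calling the result ``not hard to prove.'' Both parts of your proposal are the natural verifications: part (1) uses axiom (ii) together with $\ord\ge 0$ to rule out cusp-nodes at order zero, and part (2) uses finiteness of $\pi_0(\Gamma)$ for the existence of $m$ and a descent via axiom (i) to fill in all intermediate values.
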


A {\bf semistable ribbon graph} is a ribbon graph $\Gamma$ together with an involution $\iota$ 
that satisfies conditions 1-5 as above and an order function $\ord$.

\begin{rem}
A ribbon graph can be viewed as a semistable ribbon graph with $N=\varnothing$. 
Notice also that we can obtain $\surf(\Gamma)$ from $\surf(\Gamma,\iota)$ by splitting identified points in two;  
this is a process commonly known as {\bf normalization}. 
When $N \ne \varnothing$ we call the graph {\bf singular}.
\end{rem}

A {\bf $P$-labeled semistable ribbon graph} is 
a semistable ribbon graph together with an inclusion $x:P \hookrightarrow V(\Gamma) \sqcup C(\Gamma)$ satisfying 
the following two conditions. 
\begin{enumerate}
\item The image $x(P)$ is disjoint from the set of nodes.
\item The union $x(P) \cup N$ contains all distinguished points, that is, all cusps and vertices of valence one or two. 
\end{enumerate}
This inclusion is called a {\bf $P$-labeling}. 
An {\bf isomorphism} in this case is an isomorphism of the underlying ribbon graph 
respecting the involution and order as well as the labeling. 
Nodes are preserved in this case because they are part of the data in the involution.

\begin{lem}
If $\Gamma$ is a $P$-labeled semistable ribbon graph, then every component of the normalization of $\surf(\Gamma, \iota)$ 
(\emph{i.e.}, a component of $\surf(\Gamma) - (N \sqcup x(P))$) has non-positive Euler characteristic.
\end{lem}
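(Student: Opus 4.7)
The plan is to work one connected component $\Gamma_i$ of $\Gamma$ at a time. Since $N \sqcup x(P)$ is a finite subset of the two-dimensional surface $\surf(\Gamma)$, removing it does not change the partition into connected components, so components of $\surf(\Gamma)-(N \sqcup x(P))$ correspond bijectively with the connected components $\Gamma_i$ of $\Gamma$. Writing $L_i$ and $N_i$ for the labeled points and nodes carried by $\Gamma_i$, it suffices to prove
\[
\chi\bigl(\surf(\Gamma_i) - (L_i \cup N_i)\bigr) \le 0
\]
for every $i$.

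First I would dispose of the semistable circle case directly. By condition (3), both cusps of such a component are nodes, and $\surf(\Gamma_i)$ is a sphere with these two cusps marked; removing them yields an open cylinder, of Euler characteristic $0$. For a non-circle component I would use the triangulation supplied by the construction of $\surf$: the count of $2e_i$ triangles, $3e_i$ triangulation edges, and $v_i+n_i$ vertices gives $\chi(\surf(\Gamma_i)) = v_i + n_i - e_i$. Condition (2) of the $P$-labeling forces $L_i \cup N_i$ to contain every distinguished point, namely all $n_i$ cusps and all $k_i^1 + k_i^2$ vertices of valence one or two. Since each removed point lowers the Euler characteristic by one,
\[
\chi\bigl(\surf(\Gamma_i)-(L_i \cup N_i)\bigr) \le v_i + n_i - e_i - n_i - k_i^1 - k_i^2 = v_i - e_i - k_i^1 - k_i^2.
\]

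The last step is a handshake count: the $v_i - k_i^1 - k_i^2$ vertices of valence at least three each contribute at least $3$ to the sum of valences, so
\[
k_i^1 + 2 k_i^2 + 3(v_i - k_i^1 - k_i^2) \le 2 e_i,
\]
which rearranges to $v_i - e_i - k_i^1 - k_i^2 \le -\tfrac{1}{3}(e_i + k_i^1 + 2 k_i^2) \le 0$, using that $\Gamma_i$ has at least one edge since isolated vertices are disallowed. The point requiring care is checking that condition (2) really does supply both all cusps and all low-valence vertices in the estimate; the only genuine obstacle is isolating the semistable circle case, where the handshake argument has no vertices to act on and one must fall back on the direct topological picture together with condition (3).
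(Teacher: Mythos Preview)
Your argument is correct. The semistable-circle case is handled exactly as it must be, the inequality $\chi\bigl(\surf(\Gamma_i)-(L_i\cup N_i)\bigr)\le v_i-e_i-k_i^1-k_i^2$ follows from the fact that $L_i\cup N_i$ contains at least the $n_i$ cusps and the $k_i^1+k_i^2$ low-valence vertices, and your handshake inequality rearranges precisely as you claim.

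The paper takes a different, more qualitative route: it uses that $\surf(\Gamma_i)-C_i$ deformation retracts onto $\Gamma_i$ and then splits according to the homotopy type of the graph component. A contractible component is a tree with at least two leaves, which are forced into $x(P)\cup N$, so after removing them $\chi<0$; a circle component gives $\chi\le 0$ (with equality for semistable circles); any other graph already has $\chi(\Gamma_i)<0$. Your approach bypasses the deformation retract entirely and instead leverages the explicit triangulation count plus a valence estimate. What the paper's argument buys is conceptual brevity and a direct link to the graph's homotopy type; what yours buys is a fully arithmetic bound that does not require invoking the retraction and makes the role of condition~(2) of the $P$-labeling completely explicit.
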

\begin{proof}
By definition we know that 
$\surf(\Gamma) - C(\Gamma)$ admits $\Gamma$ as a deformation retract. 
If a component is contractible then it must have at least two labeled points or nodes 
because such graph has at least two univalent vertices and the union $x(P) \cup N$ contains all distinguished points. 
This makes the Euler characteristic negative on those components. 
If the component is a topological circle, the Euler characteristic is zero if it is a semistable circle or negative if the circle has vertices correponding to marked points. 
In any other case the connected component of the graph has negative Euler characteristic.
\end{proof}

The surface $\surf(\Gamma, \iota)$ is called a {\bf $P$-labeled semistable surface} which inherits topological characteristics from the $P$-labeled semistable ribbon graphs that generates it.

We are almost ready to define the edge collapse for semistable ribbon graphs. 
The order function keeps track of how the graph degenerates, 
and to satisfy its definition we are not allowed to collapse 
all the edges associated to 
all components of a given order:  
otherwise we would be missing a number in the list of orders in contradiction with Lemma~\ref{order}. 
This is why we need the following concept. 
A subset of edges of a given $P$-labeled semistable ribbon graph is called {\bf collapsible} 
if it does not contain all edges of a fixed order. 

\begin{df}
A \textbf{negligible} subset $X$ of a $P$-labeled semistable ribbon graph $\Gamma$ is a negligible subset as defined before with the extra condition that 
if a connected component of $X$ is a homotopy circle without labeled points that contains a boundary subgraph, 
then said boundary subgraph should not correspond with a cusp-node.
\end{df}
 
If the extra condition in the previous definition were not satisfied, a negligible boundary subgraph corresponding to a cusp-node 
would collapse to an induced graph with an involution without fixed points 
that would associate a vertex-node with a newly generated vertex-node. 
This is in contradiction with the definition of semistable ribbon graph. Notice also that the extra condition only makes sense when 
we have a semistable ribbon graph structure (which carries an involution). 
Now, when doing a total collapse of a boundary subgraph corresponding to a cusp-node, this subgraph will not disappear.  
Instead, it will generate a semistable circle. 
The induced involution without fixed points will associate 
the old vertex-node and the newly generated vertex-node to both cusps of this semistable circle. 
In this way the induced involution satisfies the condition of only 
associating cusp-nodes with vertex-nodes and vice versa. 
Another consequence is that a semistable subset (as opposed to a negligible one) of a $P$-labeled semistable ribbon graph could possibly contain boundary subgraphs giving rise to cusp-nodes.

\begin{df}
If $\Gamma$ is a $P$-labeled semistable ribbon graph and $Z\subset E(\Gamma)$ is a collapsible subset of edges, 
the {\bf edge collapse} is a new $P$-labeled semistable ribbon graph defined as follows.
\begin{itemize}

\item As a $P$-labeled ribbon graph the edge collapse is $\Gamma/Z$. 
Notice that the change on the definition of negligible subset creates semistable circles 
for each total collapse of a homotopy circle without labeled points that corresponds to a cusp-node.

\item There is a new order function defined inductively. 
For this we express $Z$ as a disjoint union $Z = \sqcup Z_{i_k}$ so that $Z_{i_k}$ is the set of all edges of order $i_k$ in $Z$ where $k=\{1,2,3,...\}$. We can assume that the $i_k$'s are in ascending order.
Define the order function on $\Gamma / Z_{i_1}$ in the following way.
\begin{itemize}
\item All connected components in $\Gamma/\Gamma_{Z_{i_1}}$ of order less than or equal to ${i_1}$ keep their orders. 
\item The order of any connected component in $\hat{\Gamma}_{Z^{sst}_{i_1}}$ is ${i_1}+1$. 
\item All connected components in $\Gamma/\Gamma_{Z_{i_1}}$ of order greater than ${i_1}$ increase their order by one. 
\end{itemize}
This defines an order function on $\Gamma/Z_{i_1} = \Gamma/\Gamma_{Z_{i_1}} \cup \hat{\Gamma}_{Z^{sst}_{i_1}}$. 
By Remark~\ref{splitcollapse}, we can continue this process inductively on $k$ until we generate an order function for $\Gamma/Z$.

\item There are possibly new induced nodes together with an involution without fixed points. 
In the case of the total collapse of a homotopy circle without labeled points that corresponds to a cusp-node 
the old vertex-node and the newly generated vertex-node are 
associated to both cusp-nodes of the generated semistable circle. 
It can be showed following the inductive construction in the previous item 
that the resulting involution without fixed point satisfies the definition required by a semistable ribbon graph.
\end{itemize} \label{semiedgecollapse}
\end{df}

The previous definition is really a lemma which we state below.

\begin{lem} \label{semi}
The edge collapse of a collapsible subset of a $P$-labeled semistable ribbon graph 
produces a new $P$-labeled semistable ribbon graph of the same topological type 
but with possibly more components of higher order and more nodes.\hfill$\square$
\end{lem}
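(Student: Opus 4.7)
The plan is to verify that the object constructed in Definition~\ref{semiedgecollapse} does in fact satisfy all the axioms of a $P$-labeled semistable ribbon graph and has the claimed topological type. I would proceed by induction on the number of distinct orders $|\{i_k\}|$ appearing among the edges of $Z$, with the inductive step reduced to a single-order collapse by Remark~\ref{splitcollapse}, which lets me write $\Gamma/Z = (\Gamma/Z_{i_1})/(Z\setminus Z_{i_1})$ and assume the collapse of $Z_{i_1}$ has already produced a valid semistable ribbon graph.

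For the base case in which every edge of $Z$ has a single order $i_1$, I split the verification into the following checks. First, that $\Gamma/\Gamma_Z \sqcup \hat{\Gamma}_{Z^{sst}}$ is a $P$-labeled ribbon graph with valid labeling: injectivity of the labels follows as in the unadorned case, using that the extra clause in the definition of negligible prevents two labels from colliding at a cusp-node. Second, that the new involution satisfies conditions (1)--(5): the delicate point is condition (4), for which I must show that each newly introduced node pair still consists of one cusp-node and one vertex-node. This is exactly why the extra clause on negligible subsets forbids total collapse of a homotopy circle that is a cusp-node boundary subgraph; instead, Definition~\ref{semiedgecollapse} turns such a subgraph into a semistable circle whose two cusps are identified, via the induced involution, with the original vertex-node and the newly-created exceptional vertex. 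Condition (5) (connectedness of $\surf(\Gamma/Z,\iota')$) is preserved because each identification introduced at a new node precisely reconnects the pieces that were torn apart by the collapse.

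Next, I verify that the inductively defined order function satisfies (i) and (ii). Collapsibility of $Z$ guarantees that at least one component of order $i_1$ survives in $\Gamma/\Gamma_{Z_{i_1}}$, so contiguity of orders up through $i_1$ is maintained; the simultaneous rules that place $\hat{\Gamma}_{Z^{sst}_{i_1}}$ at order $i_1+1$ and shift all higher orders up by one are designed so that no gap opens in the range of $\ord$. Condition (ii) requires that for each associated pair $(p,q)$ with $p\in C_i$ and $q\in V_j$, one has $\ord([\Gamma_j])<\ord([\Gamma_i])$; this needs to be checked both for surviving old pairs (where the uniform shift preserves strict inequalities) and for the newly-created pairs around a semistable circle produced by a cusp-node total collapse, where the new cusp-node lives in a component of order $i_1+1$ while its associated vertex-node lives in a component of order at most $i_1$. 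Finally, topological invariance $\surf(\Gamma,\iota)\cong\surf(\Gamma/Z,\iota')$ relative to $P$ follows from the earlier lemma applied to the negligible part of $Z$, combined with the observation that normalizing along the new node pairs exactly undoes the collapse of each $\hat{\Gamma}_{Z^{sst}_{i_1}}$ component.

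The main obstacle I anticipate is the bookkeeping in the inductive step: as orders shift by one at each stage, I need to confirm that the relation $\ord([\Gamma_j])<\ord([\Gamma_i])$ between associated cusp-nodes and vertex-nodes is preserved through the cumulative shifts, and that newly exceptional vertices and boundary cycles created at earlier stages continue to behave correctly when they are subsequently involved in a later collapse. The uniform-shift convention is built precisely to handle this, but carefully matching orders before and after each step — especially distinguishing components whose order was set to $i_k+1$ by a previous stage from components of original order — is the most error-prone part of the argument.
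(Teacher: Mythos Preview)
The paper gives no separate proof of this lemma: it is stated with a terminal $\square$ immediately after Definition~\ref{semiedgecollapse}, with the sentence ``The previous definition is really a lemma which we state below'' signalling that the construction in the definition is meant to carry the entire argument. Your proposal is therefore not competing with an existing proof but rather supplying the verification the paper leaves implicit.

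Your outline is sound and tracks the inductive structure already built into Definition~\ref{semiedgecollapse}: splitting $Z$ by order via Remark~\ref{splitcollapse}, reducing to a single-order collapse, and then checking the $P$-labeling, conditions (1)--(5) on the involution, axioms (i)--(ii) for the order function, and topological invariance. You correctly isolate condition~(4) and the cusp-node clause in the modified definition of negligible as the crux, and your use of collapsibility to guarantee that some order-$i_1$ component survives (so no gap opens in the range of $\ord$) is exactly the point of that hypothesis. The bookkeeping worry you flag about cumulative shifts is real but manageable, since the shift is uniform above the collapsed order and hence preserves all strict inequalities among existing pairs; the only genuinely new inequalities to check are those involving the freshly minted semistable circles, which you handle. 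In short, your plan is correct and simply makes explicit what the paper asserts by fiat.
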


\begin{rem}
To obtain semistable ribbon graphs with higher orders 
we need to collapse several subsets of a ribbon graph consecutively. 
Therefore, the right notion of edge collapse in a category of $P$-labeled semistable ribbon graphs is 
that of consecutive collapse of collapsible subsets.
\end{rem}

\subsection{Permissible Sequences}

Fix a pair of associated nodes on a $P$-labeled semi-stable ribbon graph. 
A {\bf tangent direction} is a choice of gluing between 
the vertex-node and the boundary cycle corresponding to the cusp-node as in Definition~\ref{gluing}. 
This choice has to be compatible with the cyclic orders on the set of half-edges of the vertex-node 
and the edges of the graph associated to the exceptional boundary cycle corresponding to the cusp-node. 
We are just choosing then an element of the finite set of isomorphism classes of graphs created by the gluing construction.

A {\bf decoration by tangent directions} on a semistable ribbon graph is 
the choice of tangent directions for each pair of associated nodes.
An {\bf isomorphism} of semistable ribbon graphs decorated by tangent directions 
must preserve the tangent directions in the sense that 
there is an induced graph isomorphism on the corresponding gluings.

Given a $P$-labeled ribbon graph $\Gamma$, 
a {\bf permissible sequence} is a sequence \[ Z_\bullet = (E(\Gamma)=Z_0,Z_1,...,Z_k) \] 
such that $Z_i \subset Z_{i-1}^{sst}$, where the inclusion is strict. 
We call $k$ the length of the sequence. 
The pair $(\Gamma, Z_\bullet)$ denotes a labeled ribbon graph and a permissible sequence in it. 
If in addition all $Z_i$'s are semistable we call this a {\bf semistable sequence}. 
An {\bf isomorphism} of ribbon graphs with permissible sequences is a ribbon graph isomorphism that preserve the permissible sequences. 

\begin{rem}
The length of the sequence will correspond with the maximal order of an associated semistable ribbon graph. 
Notice also that there is a natural bijection between pairs of length zero and $P$-labeled ribbon graphs.
\end{rem}

A {\bf negligible} subset of $(\Gamma,Z_\bullet)$ is a 
sequence $D_\bullet = (D_0, D_1,...,D_k)$ such that all $D_i$ are negligible, 
and satisfy $D_i \subset D_{i-1}$ and $D_i \subset Z_i$. 
Call ${\mathcal N}(\Gamma,Z_\bullet)$ the set of negligible subsets of $(\Gamma,Z_\bullet)$. 

\begin{rem} \label{negli}
It is easy to check that we have a bijection between 
negligible subsets of $\Gamma$ and negligible subsets of $(\Gamma,Z_\bullet)$ by using the natural restriction. 
Moreover, we can collapse along negligible subsets in a similar way as we did before. 
Given a permissible sequence $Z_\bullet$ and negligible subset $D_\bullet$ 
we define the {\bf edge collapse} of $(\Gamma,Z_\bullet)$ along $D_\bullet$ as $(\Gamma / \Gamma_{D_0}, (Z/ D)_\bullet)$ 
where $(Z / D)_\bullet$ is the sequence induced by edge collapse. 
It can be shown that the result is also permissible and has the same length. 
\end{rem}

Now that we know how to collapse along negligible subsets, 
we also want to be able to collapse permissible sequences along semistable subsets 
but we need to be careful on how we define the new sequence. 
Let $(\Gamma,Z_\bullet)$ be a $P$-labeled ribbon graph together with a permissible sequence. 
A subset $S \subset E(\Gamma)$ is {\bf collapsible} with respect to $(\Gamma, Z_\bullet)$ if 
we have $Z_i \not\subset S$ for all $i$. 
This last definition is similar to the concept of collapsible subset for semistable ribbon graphs and serves the same function. On the next lemma we assume that $S$ is semistable in $\Gamma$, otherwise we can always collapse first $S-S^{sst}$ obtaining a permissible sequence of the same lenght as in the previous remark.

\begin{lem} \label{semistable}
Given a collapsible subset $S$ with respect to $(\Gamma,Z_\bullet)$ and semistable in $\Gamma$, we can induce a new permissible sequence $(Z / S)_\bullet$ inductively. 
\end{lem}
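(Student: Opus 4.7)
The plan is to construct the sequence $(Z/S)_\bullet = ((Z/S)_0,(Z/S)_1,\ldots,(Z/S)_k)$ term by term, inducting on the index $i$ and verifying at each stage the strict inclusion $(Z/S)_i \subsetneq (Z/S)_{i-1}^{sst}$ required by the definition of permissibility. The length is preserved because of the collapsibility assumption $Z_i \not\subset S$, which prevents any term from being fully annihilated by the collapse.

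I would set $(Z/S)_0 = E(\Gamma/S)$ as the initial term. For $i \ge 1$, the natural definition is the image of $Z_i$ under the collapse map $E(\Gamma) \to E(\Gamma/S) = E(\Gamma/\Gamma_S) \sqcup E(\hat{\Gamma}_S)$: edges in $Z_i \setminus S$ pass unchanged into $E(\Gamma/\Gamma_S)$, whereas edges in $Z_i \cap S$ are sent to their image in the reduction $\hat{\Gamma}_S$, where several edges may be identified when an unlabeled valence-two vertex is removed. The collapsibility hypothesis $Z_i \not\subset S$ then forces $(Z/S)_i$ to meet $E(\Gamma/\Gamma_S)$ non-trivially, so none of the terms in the new sequence degenerate.

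The core of the argument is to verify that $(Z/S)_i \subset (Z/S)_{i-1}^{sst}$, i.e., that the image of $Z_i$ is semistable in $\Gamma/S$ as a subset of $(Z/S)_{i-1}$. My approach is to split the components of $\Gamma_{Z_i}$ into those disjoint from $S$, which pass unchanged into $\Gamma/S$ and inherit semistability from $Z_i \subset Z_{i-1}^{sst}$, and those meeting $S$. For the latter, Remark~\ref{splitcollapse} permits performing the collapse first on $Z_i \cap S$ inside $\Gamma_{Z_i}$ and then on the complement; since $S$ is semistable in $\Gamma$, its restriction $Z_i \cap S$ cannot contain a negligible configuration, which keeps the resulting components free of trees with too few labels and of boundary subgraphs without labeled vertices. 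Strictness of $(Z/S)_i \subsetneq (Z/S)_{i-1}^{sst}$ follows because any edge of $Z_{i-1}^{sst} \setminus Z_i$ descends to an edge of $(Z/S)_{i-1}^{sst} \setminus (Z/S)_i$, using that the collapse map is injective on $E(\Gamma) \setminus S$ and respects the reduction inside $S$.

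The main obstacle I anticipate lies in tracking the precise correspondence between negligible substructures in $\Gamma$ and in $\Gamma/S$: a component of $\Gamma_{Z_{i-1}}$ which was semistable in $\Gamma$ could, a priori, degenerate into something negligible after collapse. The delicate cases are a tree with exactly two labels one of which gets identified under the collapse of $S$ with a label of $\Gamma_S$, or a homotopy circle which only acquires a boundary subgraph after the collapse. Ruling these out requires simultaneously using that $S$ is semistable (each of its components carries at least two labels, or is a topological circle with a labeled vertex) and that labeled points are preserved by the edge collapse construction of Definition~\ref{edgecollap}. A brief case analysis on the topological types of components of $\Gamma_{Z_i}$ meeting $S$ then completes the induction.
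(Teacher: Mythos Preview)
Your construction is not the one the lemma intends, and the discrepancy shows up immediately in the output: you build a sequence of the \emph{same} length $k$ on the collapsed graph $\Gamma/S$, whereas the paper builds a sequence of strictly greater length $l>k$ on the \emph{original} graph $\Gamma$. The paper's proof ends with ``The resulting pair is then $(\Gamma,(Z/S)_\bullet)$'', so no edge of $\Gamma$ is actually removed; the notation $(Z/S)_\bullet$ refers to a refinement of the filtration, not to a push-forward along a collapse.

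Concretely, the paper locates the largest index $i$ with $S\subset Z_i$ (so $S\not\subset Z_{i+1}$), keeps $(Z/S)_j=Z_j$ for $j\le i$, and then inserts the new term $S\cup Z_{i+1}$ between $Z_i$ and $Z_{i+1}$. One then repeats with $S\cap Z_{i+1}$ between $Z_{i+1}$ and $Z_{i+2}$ whenever this adds something, and so on down the chain, possibly appending $S\cap Z_k$ at the very end. Semistability of $S$ in $\Gamma$ is what places each inserted term inside the semistable core of its predecessor, and collapsibility ($Z_j\not\subset S$ for all $j$) is what keeps every inclusion strict. No case analysis on components meeting $S$ is needed, nor any tracking of negligible substructures under an actual collapse.

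The reason this matters is Theorem~\ref{bijec}. A pair $(\Gamma,Z_\bullet)$ encodes a semistable ribbon graph by iterated collapse along the $Z_j$'s; performing one further semistable collapse on that side must correspond, on the permissible-sequence side, to inserting one more level into the filtration of the \emph{same} underlying $\Gamma$ (so that the maximal order increases by one). Your push-forward to $\Gamma/S$ with unchanged length does not model this operation, and the sentence ``the length is preserved'' is precisely where the approach goes off track.
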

\begin{proof}
Let $i$ be the integer satisfying $S \subset Z_i$ and $S \subset \!\!\!\!\!\! / \,\,\, Z_{i+1}$. 
Then 
we have $(Z/S)_j = Z_j$ for $j \le i$. 
Set $(Z/S)_{i+1} = S \cup Z_{i+1}$ and $(Z/S)_{i+2} = Z_{i+1}$. 
Now, if $S\cap (Z_{i+1} - Z_{i+2}) \ne \varnothing$ then $(Z/S)_{i+3} = (S - Z_{i+1}^c ) \cup Z_{i+2}$ and $(Z/S)_{i+4} = Z_{i+2}$, 
otherwise 
we would get $(Z/S)_{i+3} = Z_{i+2}$. 
We can continue this process until the last step: 
either we exhaust all of $S$, 
meaning that the last element of the sequence will be 
$(Z/S)_l = Z_k$, 
or 
we get $(Z/S)_l = S - Z_k^c$ where $k$ is the length of $Z_\bullet$ and $l$ the length of the new sequence. 
The resulting sequence can be shown to be permissible and will have $l>k$ since $S$ is semistable in $\Gamma$. 
The resulting pair is then $(\Gamma, (Z/S)_\bullet)$. 
\end{proof}

\begin{prop}  \label{corres}
A $P$-labeled ribbon graph together with a permissible sequence $Z_\bullet$ 
can be used to construct a $P$-labeled semistable ribbon graph.
\end{prop}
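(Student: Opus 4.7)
The plan is to argue by induction on the length $k$ of the permissible sequence, iteratively applying the edge-collapse operation for semistable ribbon graphs given in Definition~\ref{semiedgecollapse}.

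For the base case $k=0$, the sequence is simply $(E(\Gamma))$, and we equip the underlying connected ribbon graph $\Gamma$ with the empty node set $N=\varnothing$, the trivial involution, and the constant order function $\ord\equiv 0$. The five involution axioms and both order-function axioms are then satisfied vacuously, so this data already qualifies as a $P$-labeled semistable ribbon graph.

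For the inductive step, I would first truncate to the length-$(k-1)$ permissible sequence $(Z_0,Z_1,\ldots,Z_{k-1})$ and apply the induction hypothesis to produce a $P$-labeled semistable ribbon graph $(G^-,\iota^-,\ord^-)$ of maximal order $k-1$. Tracing the inductive construction, the components of maximal order in $G^-$ are precisely the components of $\hat\Gamma_{Z_{k-1}^{sst}}$ produced at the last collapse step, so $Z_k\subsetneq Z_{k-1}^{sst}$ embeds naturally as a subset of the edges of those top-order components. Because this inclusion is strict, $Z_k$ does not contain all edges of any fixed order in $G^-$, and hence it is collapsible in the sense defined just before Definition~\ref{semiedgecollapse}. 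Applying that definition to $(G^-,Z_k)$ produces a new $P$-labeled semistable ribbon graph, and Lemma~\ref{semi} guarantees that the result is well-formed; moreover, the strict inclusion $Z_k\subsetneq Z_{k-1}^{sst}$ forces $\hat\Gamma_{Z_k^{sst}}$ to contribute at least one new top-order component, so the maximal order is raised to exactly $k$, as required.

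The main obstacle I anticipate is the bookkeeping needed to keep the filtration $Z_\bullet$ in the original graph $\Gamma$ aligned with its image under the repeated reductions $\hat{\,\cdot\,}$ and partial collapses occurring in the induction. Specifically, one must verify that each $Z_i^{sst}$ computed in $\Gamma$ coincides with the semistable part of its image in the order-$i$ components of the inductively built semistable ribbon graph, and that the strict inclusions $Z_{i+1}\subsetneq Z_i^{sst}$ remain strict after reduction; this amounts to checking that reduction (which only suppresses unlabeled valence-two vertices) commutes appropriately with semistabilization and with edge collapse. Particular care is needed in the case where a total collapse of a boundary subgraph paired to a cusp-node generates a new semistable circle, since both of its cusp-nodes must then be correctly attached via the updated involution $\iota$ so that axiom (4) (cusp-nodes paired with vertex-nodes) is preserved when the induction is unwound.
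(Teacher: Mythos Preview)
Your proposal is correct and follows essentially the same inductive strategy as the paper. The paper's proof differs only in presentation: it first collapses the negligible parts $Z_i - Z_i^{sst}$ so as to assume all $Z_i$ (for $i>0$) are already semistable, and then describes the involution and order function explicitly at each stage (exceptional vertices paired with exceptional boundary cycles, order~$0$ on $\Gamma/\Gamma_{Z_1}$, order~$1$ on $\hat\Gamma_{Z_1}$, and so on), rather than invoking Definition~\ref{semiedgecollapse} and Lemma~\ref{semi} as a black box. Your ``main obstacle'' --- aligning $Z_i^{sst}$ in $\Gamma$ with the semistable part computed inside the inductively built pieces --- is exactly what that preliminary reduction to a semistable sequence buys; after it, the edge sets of the top-order components are literally $Z_{i}$ and the issue disappears.

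One small over-claim: the strict inclusion $Z_k \subsetneq Z_{k-1}^{sst}$ guarantees that $Z_k$ is collapsible, but it does \emph{not} force $Z_k^{sst}\ne\varnothing$; the last term $Z_k$ of a permissible sequence may well be negligible, in which case no new top-order piece is created and the maximal order stays at $k-1$. This does not affect the proposition (which only asserts that \emph{some} $P$-labeled semistable ribbon graph is produced), but you should drop the ``as required'' after ``raised to exactly $k$''.
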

\begin{proof}
For $i>0$ we can always collapse $Z_i - Z_i^{sst}$ since these sets are negligible due to maximality. 
Therefore we can assume that all $Z_i$ are semistable for $i>0$. 
The disjoint union $\Gamma / \Gamma_{Z_1} \sqcup \hat{\Gamma}_{Z_1}$ naturally inherits a semistable ribbon graph structure 
through the involution identifying exceptional vertices with their corresponding exceptional boundary cycles. 
The connected components of $\hat{\Gamma}_{Z_1 - Z_1^{st}}$ are semistable circles. 
The components in $\Gamma / \Gamma_{Z_1}$ only contain vertex-nodes and thus all those components have order zero. 
All the components of $\hat{\Gamma}_{Z_1}$ have at least one cusp-node associated to a vertex-node in a component of order zero and 
hence all those components have order one. 
The $P$-labeling naturally induces a $P$-labeling on the semistable ribbon graph. 
We can inductively apply this process to $\hat{\Gamma}_{Z_i}$ and $Z_{i+1}$, 
thus obtaining a $P$-labeled semistable ribbon graph 
$(\Gamma / \Gamma_{Z_1} \sqcup \hat{\Gamma}_{Z_1} / \Gamma_{Z_2} \sqcup \cdots \sqcup \hat{\Gamma}_{Z_k},\iota,x)$. 
\end{proof}

Now is turn to describe the connection between ribbon graphs with semistable sequences and 
semistable ribbon graphs with decorations by tangent directions. 

\begin{thm} \label{bijec}
There exists a natural bijection between 
isomorphism classes of $P$-labeled ribbon graphs with semistable sequences and 
isomorphism classes of $P$-labeled semistable ribbon graphs with decorations by tangent directions. 
This identification preserves isomorphism classes of negligible and collapsible semistable subsets (with respect to the given structures) 
and commutes with the edge collapse of the corresponding sets.
\end{thm}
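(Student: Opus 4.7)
The plan is to exhibit mutually inverse explicit constructions between the two sets of isomorphism classes, essentially extending Proposition~\ref{corres} and constructing an inverse.

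For the forward map, we enhance the construction of Proposition~\ref{corres}. Given a pair $(\Gamma, Z_\bullet)$ with semistable sequence, the proposition already yields a $P$-labeled semistable ribbon graph. The tangent direction data arises automatically: when we collapse $Z_i$ and produce an exceptional vertex in $\Gamma/\Gamma_{Z_i}$ paired with its exceptional boundary cycle in $\hat{\Gamma}_{Z_i}$, the original ribbon graph structure on $\Gamma$ specifies how the half-edges of $H_{Z_i}$ interleave, in cyclic order around that vertex, with the half-edges surviving in the quotient. This interleaving is precisely a gluing between the blown-up vertex and the boundary cycle in the sense of Definition~\ref{gluing}, so it is a tangent direction. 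Assembling these over all $i$ gives the decoration by tangent directions. Isomorphisms of $(\Gamma, Z_\bullet)$ clearly induce isomorphisms of the decorated semistable ribbon graph, so we get a well-defined map on isomorphism classes.

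For the inverse, I would proceed by induction on the maximal order $m$ of the semistable ribbon graph $(\Gamma', \iota, \ord)$. For each component $\Gamma'_j$ of order $m$, each cusp-node of $\Gamma'_j$ is associated by $\iota$ to a vertex-node in some component of strictly smaller order (by property (ii) of the order function). Using the tangent direction at each such pair, perform the gluing of Definition~\ref{gluing}: blow up the vertex-node and glue the exceptional boundary cycle of $\Gamma'_j$ into it according to the tangent direction. The reduction operation $\hat{\Gamma}_Z$ is undone by this gluing step, since the blow-up/gluing construction naturally reintroduces valence-two vertices at points where the boundary subgraph meets interior edges; in particular, a semistable circle of order $m$ (which was a reduction of a homotopy circle) is reattached as the original loop or $\theta$-like configuration. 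Iterating from order $m$ down to $0$ yields a single connected ribbon graph $\Gamma$ (connectedness comes from condition (5) on $\surf(\Gamma,\iota)$). Define $Z_\bullet$ by letting $Z_i$ be the set of edges of $\Gamma$ coming from components of order $\geq i$, together with the edges newly created by gluings at stages $\geq i$. Verification that each $Z_i \subsetneq Z_{i-1}^{sst}$ uses Remark~\ref{trees} and the definition of semistable ribbon graph: the components of order $i$ are either contractible with at least two distinguished points or are circles without labeled interior points, so their edges survive into $Z_{i-1}^{sst}$ but are strictly absent from $Z_i$.

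To finish, one checks mutual inversion and compatibility. Mutual inversion is a local check at each stage: the gluing and collapse operations on a single $Z_i$ are inverse to one another once the tangent direction is fixed, and the inductive structure matches under $\ord$ versus position in the sequence. Preservation of negligible subsets on the sequence side is Remark~\ref{negli}; the decoration by tangent directions is unaffected by negligible collapses since these do not touch the exceptional vertices associated to nodes (this is where the refined definition of negligible in the semistable setting, forbidding collapse of a boundary subgraph at a cusp-node, is essential). Collapsible subsets match up directly: the condition $Z_i \not\subset S$ in $(\Gamma, Z_\bullet)$ translates, under the bijection, into $S$ not containing all edges of order $i$ in the semistable ribbon graph. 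Commutativity with edge collapse then follows by reducing to one level at a time via Remark~\ref{splitcollapse} combined with Lemmas~\ref{semi} and~\ref{semistable}.

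The main obstacle, in my view, lies in handling the reduction $\hat{\Gamma}_Z$ coherently under the inverse map: the forward map deletes valence-two unlabeled vertices and replaces homotopy circles by semistable circles, while the inverse map must restore these through the gluing procedure without introducing ambiguity. The key observation that resolves this is that the parametrizations involved in a Definition~\ref{gluing} gluing are determined up to isomorphism by the tangent direction, and the forward map's data records exactly that choice; combined with the triviality of the automorphism group of the semistable circle (noted earlier in the paper), this ensures that blow-up and collapse are mutually inverse at the level of isomorphism classes.
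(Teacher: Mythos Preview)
Your proposal is correct and follows essentially the same approach as the paper: the forward map via Proposition~\ref{corres} with tangent directions read off from the interleaving of half-edges, the inverse via the gluing construction applied level by level, and the compatibility checks via Remark~\ref{negli} together with Lemmas~\ref{semi} and~\ref{semistable}. You supply considerably more detail than the paper (the inductive organization of the inverse, the discussion of how reduction is undone, and the role of the semistable circle's trivial automorphism group), but the underlying argument is the same.
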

\begin{proof}
Let $\Gamma$ be a $P$-labeled ribbon graph and $Z_\bullet$ a semistable sequence. 
This data generates a $P$-labeled semistable ribbon graph by Proposition~\ref{corres}. 
To obtain the decorations by tangent directions, 
it is enough to keep track of where the half-edges of a vertex-node were attached on the original graph. 
This correspondence naturally descends to a correspondence on isomorphism classes.

Now suppose we have a $P$-labeled semistable ribbon graph decorated by tangent directions. 
The decorations by tangent directions allow us to reconstruct 
a $P$-labeled ribbon graph 
by using the gluing construction on vertex-nodes and boundary cycles. 
Since this is defined only up to isomorphism, this correspondence is well defined on isomorphism classes. 
On a representative, every component of a semistable graph induces a subgraph of the ribbon graph. 
Together with the order, this defines a sequence of subgraphs $Z_\bullet$ in the ribbon graph up to isomorphism. 
It is not hard to check that this sequence will indeed be semistable.

These correspondences are inverses of each other on isomorphism classes by construction. 
Remark~\ref{negli} implies that negligible subsets are preserved and it also implies the commutativity with the edge collapse. 
For collapsible semistable subsets we also use the natural restriction and the gluing construction 
to track the image of these sets under the bijection. 
By the definitions, Lemma~\ref{semi} and Lemma~\ref{semistable}, 
collapsible semistable subsets are preserved by the bijection.
\end{proof}

\begin{rem}
In fact, 
it is possible to define a category of semistable ribbon graphs and another one of ribbon graphs with permissible sequences. 
After defining the right notion of morphism, the previous theorem can be extended to an equivalence of 
the appropriate categories.
\end{rem}

\section{The Semistable Ribbon Graph Complex}

When considering unital metrics on $P$-labeled semistable ribbon graphs with decorations by tangent directions 
giving rise to surfaces of genus $g$, 
one obtains $\MUC{g}{n}$ which is homeomorphic to $\MUD{g}{n}$ (\cite{zun}). 
We define a complex based on the natural orbicell decomposition of $\MUC{g}{n}$.

\subsection{Grading and Orientation}

Let $\Gamma$ represent a $P$-labeled semistable ribbon graph with decorations by tangent directions 
(from now on, simply a semistable ribbon graph). 
Let $m=\ord(\Gamma)$ be as in Lemma~\ref{order}
, part 2, and denote by $\Gamma_i$ the subgraph corresponding with order $i$. 
We will refer to $\Gamma_i$ as the {\bf piece} of order $i$ which may contain several connected components. Define the {\bf degree} of a semistable ribbon graph as \[ \deg \Gamma = \sum_{k = 0}^m \# E(\Gamma_k)-1. \]

Recall that if $V$ is an $n$-dimensional vector space, then $\det (V) = \bigwedge^n V$. 
Also, if $A$ is a set and $k$ a field, 
the vector space $k A$ is defined as the $|A|$-dimensional vector space generated by $A$.

An \textbf{orientation} of a semistable ribbon graph is a unit vector in \[ \det ( \R E(\Gamma) \oplus \R^m ) \] 
that we denote by ``$\ori$''. 
If $\{ e_i \} = E(\Gamma)$ and $\{o_i\}$ is the canonical basis of $\R^m$ so that $o_i$ represents the piece of order $i-1$ of $\Gamma$, an orientation can be represented as \[ \ori = [e_1, e_2, e_3, ..., o_1, o_2, ..., o_m]. \]
 
From the definition, an odd permutation changes the sign of the orientation while an even permutation does not.

Let \[ V^{g,n}_k = \Span_\R \{ (\Gamma, \ori) \}, \] 
where $\Gamma$ is a semistable ribbon graph of degree $k$ that gives rise to a surface of genus $g$ with $n$ labeled points. 
Consider the subspace 
\[ W^{g,n}_k = \Span_\R \{ (\Gamma, \ori) + (\Gamma, -\ori) \}, \] 
where $\Gamma$ is as before. 
Now define \[ G^{g,n}_k = V^{g,n}_k / W^{g,n}_k. \] 
In this graded vector space the relation $(\Gamma, -\ori) = -(\Gamma, \ori)$ is satisfied 
(it is understood that this refers to the equivalent classes in the quotient).

\subsection{Differential}

The operator $d_e : G^{g,n}_k \to G^{g,n}_{k-1}$ is defined by \[ d_e (\Gamma, \ori) = \sum_{e \in E(\Gamma)} (\Gamma/\{ e \}, \ori_e), \] 
where the sum is taken over all edges except for those with both ends being labeled points 
(which in the case of a loop refers to the common vertex). 
The orientation $\ori_e$ is induced by choosing a representative of the form $\ori = [e,...]$ and letting $\ori_e = [...]$, 
that is, deleting the factor corresponding to the edge ``$e$'' in the orientation after choosing a representative 
with such factor in front. 
Clearly, this operator does not change the topological type of the graph, 
and hence defines a map $G^{g,n}_* \to G^{g,n}_*$. 
It remains to show that this operator has degree -1:  This follows from the fact that contracting an edge reduces the number of edges by one in some piece $\Gamma_i$ without altering the others.

Define the operator $d_s : G^{g,n}_k \to G^{g,n}_{k-1}$ by \[ d_s (\Gamma, \ori) = \sum_{Z \subset \Gamma} (\Gamma/Z,\ori_Z),  \] 
where the sum is taken over all semistable subgraphs that are completely contained in a single piece of a fixed order. 
The induced orientation $\ori_Z$ is defined as follows. 
Let $[e_1, e_2, e_3, ..., o_1, o_2, ..., o_m]$ be a representative of $\ori$. 
If $Z \subset \Gamma_i$, 
then for $\ori_Z$ we choose the representative $[o_{i+1}',e_1, e_2, e_3, ..., o_1', o_2', ..., o_{m+1}']$ 
where $o_k' = o_k$ for $1 \le k \le i$ and $o'_k = o_{k-1}$ for $i+2 \le k \le m+1$, 
here $o_{i+1}'$ corresponds to the newly created piece of order $i+1$ and it is attached in front. 
As it was proved in Lemma~\ref{semi}, 
collapsing semistable subsets does not change the topological type; 
therefore this is a map 
from $G^{g,n}_* \to G^{g,n}_*$. 
To show that the operator has degree -1 consider $\Gamma/Z$. 
Since 
\[ E((\Gamma/Z)_k) = \begin{cases} E(\Gamma_k) & 1 \le k \le i-1 \\ E(\Gamma_i) - E(Z) & k = i \\ E(Z) & k = i+1 \\ E(\Gamma_{k-1}) & i+2 \le k \le m+1, \end{cases} \] 
we obtain 
\begin{align*} \deg (\Gamma/Z) & = \sum_{k=0}^{m+1} \# E((\Gamma/Z)_k)-1 \\
 & = \left( \sum_{k=0}^{i-1} \# E(\Gamma_k) - 1 \right) + (\# E(\Gamma_i) - \# E(Z) - 1) + \\
 & \qquad 
\hskip 0.5in + (\# E(Z) - 1)+ \left( \sum_{k=i+1}^{m} \# E(\Gamma_k) -1 \right) \\ 
& = \left( \sum_{k=0}^m \# E(\Gamma_k) -1 \right) - 1 = \deg(\Gamma) -1. 
\end{align*}

\begin{ex}
Consider the graph $\Gamma \in G^{0,5}_8$ in Figure~\ref{dt}. 
The set of edges is $E(\Gamma) = \{ a_1, a_2, a_3, a_4, b_1, b_2, b_3, b_4, c\}$. 
Let $A$ be the subgraph with set of edges $E(A) = \{ a_k \}$ and $B$ the graph with $E(B) = \{ b_k \}$. 
Collapsing the edges in $A$ yields the graph $\Gamma / A$ shown on the left of Figure~\ref{degdt}. 
Degenerating $A \cup B$ yields $\Gamma /(A \cup B)$, shown on the right of Figure~\ref{degdt}. 
The numbers on the right of the semistable graphs represent 
the order and 
the dotted lines represent how the pieces of different orders are joint. 
For the graph collapsing, 
the subgraph $B$ generates the same graph as collapsing the subgraph $A$ of the graph on the right. 
However, 
the orientations are not the same. 
Let \[ \ori = [a_1, a_2, a_3, a_4, b_1, b_2, b_3, b_4, c] \] 
be the orientation of the graph on the top of Figure~\ref{degdt}. 
The orientations of the collapsed graphs are 
\begin{align*}
\ori_A & = [o_1, a_1, a_2, a_3, a_4, b_1, b_2, b_3, b_4, c], \\
\ori_{A \cup B} & = [o_1, a_1, a_2, a_3, a_4, b_1, b_2, b_3, b_4, c]. 
\end{align*} 
In turn, collapsing $B$ in the graph of the left and collapsing $A$ in the graph of the right 
produces the same graph in the bottom but with orientations 
given by 
\begin{align*}
(\ori_A)_B & = [o_1',o_2', a_1, a_2, a_3, a_4, b_1, b_2, b_3, b_4, c], \\
(\ori_{A \cup B})_A & = [o_2',o_1', a_1, a_2, a_3, a_4, b_1, b_2, b_3, b_4, c].
\end{align*}
\end{ex}

\begin{figure}
\begin{center}
\begin{tikzpicture}[scale=1.1] % double theta graph
\draw (0,0) circle (1cm);
\draw (0,1) -- (0,-1);
\draw (1,0) -- (3,0);
\draw (4,0) circle (1cm);
\draw (4,1) -- (4,-1);

\fill (0,1) circle (3pt);
\fill (0,-1) circle (3pt);
\fill (1,0) circle (3pt);
\fill (3,0) circle (3pt);
\fill (4,-1) circle (3pt);
\fill (4,1) circle (3pt);

\node at (-0.7,0) {$a_1$};
\node at (0.3,0) {$a_2$};
\node at (0.95,0.8) {$a_3$};
\node at (0.95,-0.8) {$a_4$};

\node at (4.7,0) {$b_1$};
\node at (3.7,0) {$b_2$};
\node at (3.05,0.8) {$b_3$};
\node at (3.05,-0.8) {$b_4$};

\node at (2,-0.3) {$c$};
\end{tikzpicture}
\end{center}
\caption{An element of $G^{0,5}_8$.} \label{dt}
\end{figure}
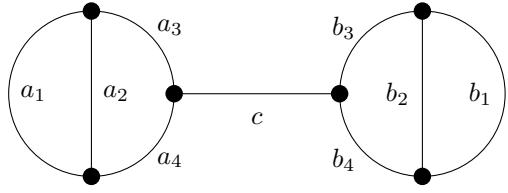

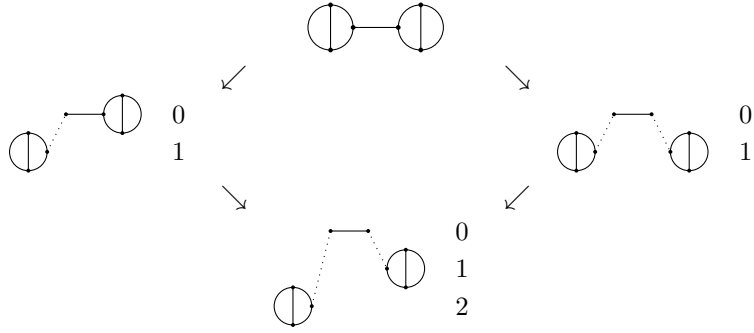
\begin{figure}
\[ \begin{array}{ccccc}
  &    & \begin{tikzpicture}[scale=0.3] % double theta graph
\draw (0,0) circle (1cm);
\draw (0,1) -- (0,-1);
\draw (1,0) -- (3,0);
\draw (4,0) circle (1cm);
\draw (4,1) -- (4,-1);

\fill (0,1) circle (3pt);
\fill (0,-1) circle (3pt);
\fill (1,0) circle (3pt);
\fill (3,0) circle (3pt);
\fill (4,-1) circle (3pt);
\fill (4,1) circle (3pt);
\end{tikzpicture}  &   &   \\
  &  \swarrow  &   & \searrow  &   \\
\begin{tikzpicture}[scale=0.25]
\draw (0,0) circle (1cm);
\draw (0,1) -- (0,-1);

\draw[dotted] (1,0) -- (2,2);

\draw (2,2) -- (4,2);
\draw (5,2) circle (1cm);
\draw (5,3) -- (5,1);

\fill (0,1) circle (3pt);
\fill (0,-1) circle (3pt);
\fill (1,0) circle (3pt);

\fill (2,2) circle (3pt);
\fill (4,2) circle (3pt);
\fill (5,1) circle (3pt);
\fill (5,3) circle (3pt);

\node at (8,2) {0};
\node at (8,0) {1};
\end{tikzpicture}  &    &   &   & \begin{tikzpicture}[scale=0.25]
\draw (0,0) circle (1cm);
\draw (0,1) -- (0,-1);

\draw (2,2) -- (4,2);

\draw (6,0) circle (1cm);
\draw (6,1) -- (6,-1);

\draw[dotted] (1,0) -- (2,2);
\draw[dotted] (4,2) -- (5,0);

\node at (9,2) {0};
\node at (9,0) {1};

\fill (0,1) circle (3pt);
\fill (0,-1) circle (3pt);
\fill (1,0) circle (3pt);

\fill (2,2) circle (3pt);
\fill (4,2) circle (3pt);

\fill (5,0) circle (3pt);
\fill (6,-1) circle (3pt);
\fill (6,1) circle (3pt);
\end{tikzpicture}  \\
  &  \searrow  &   & \swarrow &   \\
  &    & \begin{tikzpicture}[scale=0.25]
\draw (0,0) circle (1cm);
\draw (0,1) -- (0,-1);

\draw (2,4) -- (4,4);

\draw (6,2) circle (1cm);
\draw (6,3) -- (6,1);

\draw[dotted] (1,0) -- (2,4);
\draw[dotted] (4,4) -- (5,2);

\node at (9,4) {0};
\node at (9,2) {1};
\node at (9,0) {2};

\fill (0,1) circle (3pt);
\fill (0,-1) circle (3pt);
\fill (1,0) circle (3pt);

\fill (2,4) circle (3pt);
\fill (4,4) circle (3pt);

\fill (5,2) circle (3pt);
\fill (6,1) circle (3pt);
\fill (6,3) circle (3pt);
\end{tikzpicture}
  &   &  
\end{array}
\]
\caption[Subgraph differential]{The ribbon graph on the top degenerates to different semistable ribbon graphs.}  \label{degdt}
\end{figure}

\begin{thm} \label{mainth}
The map $d : G^{g,n}_k \to G^{g,n}_{k-1}$ defined as $d = d_e + d_s$ is a differential.
\end{thm}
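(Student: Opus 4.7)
The goal is to prove $d^2 = 0$, which I would do by expanding
\[ d^2 = d_e^2 + (d_e d_s + d_s d_e) + d_s^2 \]
and exhibiting a sign-reversing involution on the pairs of successive collapse operations contributing to each block.

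For $d_e^2 = 0$, I would use the classical Kontsevich-style argument: pair $(e_1, e_2)$ with $(e_2, e_1)$ for every pair of distinct edges admissible for $d_e$. The resulting graphs $(\Gamma/\{e_1\})/\{e_2\}$ and $(\Gamma/\{e_2\})/\{e_1\}$ coincide, while the orientations $(\ori_{e_1})_{e_2}$ and $(\ori_{e_2})_{e_1}$ differ by the transposition of the two deleted front symbols, hence yield opposite signs. If $e_2$ becomes inadmissible after collapsing $e_1$ (both endpoints labeled), then symmetrically $e_1$ becomes inadmissible after collapsing $e_2$, so both orderings vanish together and the pairing remains consistent.

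For $d_e d_s + d_s d_e = 0$, I would pair each term $(Z, e)$ of $d_e d_s$ (semistable $Z$ in piece $\Gamma_i$ collapsed first, then edge $e$) with the term $(e, Z)$ of $d_s d_e$ (edge $e$ first, then $Z$), provided $e \notin E(Z)$. The resulting semistable ribbon graphs have identical piece structures, and the orientations differ precisely by the commutator of ``prepend $o'_{i+1}$'' and ``remove $e$ and relabel,'' which is a single sign change. The cases $e \in E(Z)$ are treated separately: if $e$ lies in the semistable core of $Z$, the preliminary edge-collapse renders $Z \setminus \{e\}$ non-semistable in the sense required by $d_s$, so the $d_s d_e$ contribution vanishes, while in $d_e d_s$ the edge $e$ has already been swallowed by the $Z$-collapse and is unavailable for a subsequent $d_e$. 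For $d_s^2 = 0$, pair disjoint semistable subgraphs $(Z_1, Z_2)$ each contained in a single piece with $(Z_2, Z_1)$; the two results share the same underlying ribbon graph and differ only in how the two newly created pieces are assigned the orders $i+1$ and $i+2$, which shows up as a transposition of the two new generators $o'_{i+1}$ and $o''_{i+1}$ at the front of the orientation. The nested regime, in which the image of $Z_2$ in $\Gamma/Z_1$ lies inside $\hat{\Gamma}_{Z_1^{sst}}$, is treated using the bijective correspondence between $P$-labeled ribbon graphs with semistable sequences and $P$-labeled semistable ribbon graphs with tangent directions (Theorem~\ref{bijec}); the relevant involution then comes from rearranging the resulting length-two semistable sequence in $\Gamma$.

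The main obstacle is the sign bookkeeping inside $\det(\R E(\Gamma) \oplus \R^m)$: each application of $d_s$ prepends a new generator $o'_{i+1}$ and shifts the indices of all higher-order $o$-generators by one, so two successive applications of $d_s$ (or one $d_s$ combined with one $d_e$) require careful tracking of both the index shifts and the transposition of newly added $o'$ generators. A secondary technical point is ensuring that the admissibility conditions -- labeled endpoints of edges, semistability and collapsibility of subgraphs inside fixed pieces -- remain compatible under two consecutive operations, which is most delicate in the nested regime handled via Theorem~\ref{bijec}.
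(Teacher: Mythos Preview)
Your arguments for $d_e^2 = 0$ and for $d_e d_s + d_s d_e = 0$ are essentially those of the paper. The genuine gap is in your involution for $d_s^2$.

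When $Z_1$ and $Z_2$ lie in \emph{distinct} pieces of $\Gamma$, the swap $(Z_1,Z_2)\leftrightarrow(Z_2,Z_1)$ does work. But when $Z_1,Z_2$ are disjoint inside the \emph{same} piece $\Gamma_i$, the two orders of collapse yield non-isomorphic semistable ribbon graphs: after $(Z_1,Z_2)$ one gets $Z_2$ at order $i{+}1$ and $Z_1$ at order $i{+}2$, while after $(Z_2,Z_1)$ this assignment is reversed. The order function is part of the data of a semistable ribbon graph, so these are distinct basis elements of $G^{g,n}_*$; no transposition of $o$-generators in the orientation can make them cancel. Your separate handling of the nested regime via ``rearranging the resulting length-two semistable sequence'' also fails: a permissible sequence $(E(\Gamma),Z_1,Z_2)$ requires $Z_2\subset Z_1^{sst}$ strictly, so there is nothing to rearrange.

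The paper instead pairs the nested and the disjoint-same-piece regimes with each other. Given $W\subset Z$ in $\Gamma_i$, the term ``collapse $Z$, then collapse $W$ inside the new piece of order $i{+}1$'' cancels with the term ``collapse $W$, then collapse $Z\setminus W$ inside the remaining piece of order $i$'': both produce piece $i{+}1 = Z\setminus W$ and piece $i{+}2 = W$, with orientations $[o_h,o_k,\ldots]$ versus $[o_k,o_h,\ldots]$ differing by a sign. This is exactly what the worked example with $W=A$, $Z=A\cup B$ preceding the theorem illustrates. The correct involution on same-piece terms is therefore $(Z_1,Z_2)\leftrightarrow(Z_1\cup Z_2,\,Z_1)$, not $(Z_1,Z_2)\leftrightarrow(Z_2,Z_1)$.
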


\begin{proof}
If it can be proved that $d_e^2=0$, $d_s^2=0$ and $d_e d_s + d_s d_e=0$ 
then the result follows since we have \[ (d_e + d_s)^2 = d_e^2 + d_e d_s + d_s d_e + d_s^2. \]

For $d_e$ consider two edges $a,b \in E(\Gamma)$ with the property that the set of vertices associated to these edges 
have at least two elements which are not labeled points and, further, if one of them is a labeled point 
then the associated edge is not a loop and the other vertex of this edge is not a labeled point. 
This condition guarantees that we can collapse both edges without modifying the Euler characteristic, 
and hence the topological type remains unchanged. 
Choose a representative of the orientation of $\Gamma$ of the form $[a,b, ...]$. 
Then, collapsing first $a$ and then $b$ induces the orientation $[...]$; 
but collapsing first $b$ and then $a$ changes the order and hence the sign, thus obtaining $-[...]$. 
This two different orders appear in $d_e^2$ and therefore we get $d_e^2 = 0$.

To show $d_s^2 = 0$ take two semistable subgraphs $W$ and $Z$ with 
$W \subset Z$ completely contained in a piece of fixed order, say $\Gamma_i$. 
There are two options for $d_s^2$: either $W$ is collapsed first and then $Z$, 
or $Z$ is collapsed first and then $W$. 
In the first case the orientation induced is $(\ori_W)_Z = [o_k, o_h, ...]$ 
where $o_h$ and $o_k$ are created by the collapse of $W$ and $Z$ respectively. 
In the second case the induced orientation is $(\ori_Z)_W = [o_h, o_k, ...]$. 
In both cases $o_k$ corresponds with order $i+1$ and $o_h$ with order $i+2$. 
As in the case of $d_e$, the difference between these two is a sign and since all collapses in $d_s^2$ 
appear in this case we conclude $d_s^2 =0$.

Finally, to show $d_e d_s + d_s d_e=0$ consider an edge $e$ so that 
both of its vertices are not labeled points and $Z$ a semistable subset completely contained in a piece of a fixed order. 
Choosing a representative of the form $\ori = [e, ...]$ again yields two options: 
we can collaps $e$ and then $Z$, or viceversa. 
In the first case the orientation induced is $(\ori_e)_Z = [o_k, ...]$ and in the second 
the induced orientation is $(\ori_Z)_e = -[o_k,...]$ because of $\ori_Z = [o_k,e,...] = -[e,o_k,...]$.
\end{proof}

The previous theorem shows that \[ G^{g,n} = \left( \bigoplus_{k \ge 0} G_k^{g,n},d = d_e + d_s \right) \] is a chain complex.

\begin{figure}

\begin{center}
\begin{tikzpicture}[scale=1.3]
\fill (0,2) circle (2pt);
\fill (-1.732,-1) circle (2pt);
\fill (1.732,-1) circle (2pt);
\draw[thick] (0,2) -- (-1.732,-1) -- (1.732,-1) -- (0,2);

\fill (0,-1) circle (2pt);
\fill (-0.866,0.5) circle (2pt);
\fill (0.866,0.5) circle (2pt);
\draw[thick] (0,-1) -- (-0.866,0.5) -- (0.866,0.5) -- (0,-1);

\node at (0,0) {$\Theta$};
\node at (0,1) {$\Omega_1$};
\node at (-0.866,-0.5) {$\Omega_2$};
\node at (0.866,-0.5) {$\Omega_3$};
\end{tikzpicture}
\end{center}

\caption[Complex (0,3)]{The complex $G^{0,3}$.}  \label{simplecomplex}
\end{figure}
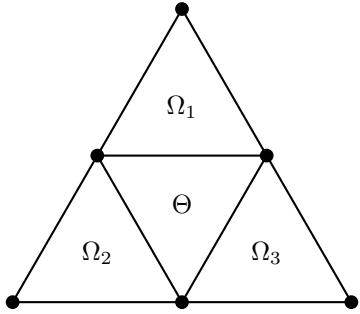

In Figure~\ref{simplecomplex} a geometric representation of $G^{0,3}$ is shown 
where $\Theta$ is the theta graph and the $\Omega_i$'s represent the three non-isomorphic ribbon graphs 
obtained from different labelings of the $\Omega$ graph. 
Their faces are also graphs in Figure~\ref{simplegraphs}. 
In this example the differential $d_s$ is trivially zero because a three labeled sphere 
does not degenerate to a singular surface. 
For $G^{0,4}$ the map $d_s$ is not zero but $d_s^2$ is trivially zero because a sphere with four labels cannot degenerate twice.

\subsection{Differences with previous work}

The chain complex $G^{g,n}$ contains the graph complex studied in \cite{convog} to formalize the ideas in \cite{kon:fncsg}. 
A few differences worth mentioning are the following.

In \cite[Section 4]{convog} the graph complex is denoted by $(P \overline{\mathcal O G})_S$ 
where $S$ is a surface of topological type $(g,n)$ generated by a graph, 
$P$ refers to ``primitive part'', which means that the graphs are connected, 
and $\mathcal O$ refers to ``operad'', 
it is taken to be the associated operad so that the complex is based on ribbon graphs. 
All graphs in \cite{convog} are contained in $G^{g,n}$. 
The additional non-semistable ribbon graphs in $G^{g,n}$ not included in $(P \overline{\mathcal O G})_S$ 
are the ones obtained from collapsing loops corresponding to labeled points. 
Clearly, semistable graphs (correponding to singular surfaces) represent new elements of the graph complex.

The orientation described in \cite{kon:fncsg} is further explored in \cite{convog} and \cite{lavo}. 
The orientation of a ribbon graph in the graph complex is a unit vector in $\det (\R E(\Gamma) \oplus H_1(\Gamma, \R))$. 
The $H_1(\Gamma, \R)$ term however is not altered by the differential defined in previous work 
because the graphs are not allowed to degenerate enought to change it's homology. 
The orientation used in $G^{g,n}$ is similar to the ``twisted orientation'' in \cite{lavo} (without the $H_1(\Gamma, \R)$ term). 
This makes sense because $G^{g,n}$ is based on the space $\MUC{g}{n}$ which is compact (\cite{zun}) and 
the twisted version of the orientation in \cite{lavo} corresponds with (co)homology with compact support. 

The analogue of $d_e$ in $(P \overline{\mathcal O G})_S$ is trivially zero when collapsing an edge that forms a loop in the graph. 
This is not the case in $G^{g,n}$ because such loops are allowed to degenerate since the moduli spaced being modeled is compact 
(this is the case of decorations on labeled points being allowed to degenerate to zero). 
The differential $d_s$ is new, but it is hinted in \cite{kon:itma} and \cite{loo}.
\medskip

To show that $G^{g,n}$ computes the homology of the moduli space $\MU{g}{n}$ 
it would be necessary as in \cite{convog} to describe an spectral sequence generated by the filtration 
induced by the orbicell decomposition in $\MUC{g}{n}$, 
and show that it converges to the expected homology.
\bigskip

\bibliographystyle{amsalpha}
\bibliography{GHME}

% convog - On a theorem of Konsevich
% cos:pftft - The GM potencial assoicated to a TCFT
% hvz - OC moduli and alg. structures
% kon:itma - Intersection theory on moduli and matrix Airy function
% kon:fncsg - Formal non-commutative symplectic geometry
% lavo - Graph homology: Kozul and Verdier Duality
% loo - The moduli space of curves
% mupe - Ribbon graphs, quadratic diff and alg curves
% pen87 - The decorated Teichmuller space of punctured surfaces
% zun - Compactifications of moduli spaces and cellular decompositions
% zvon - Strebel differentials on stable curves

\end{document}